\newcommand{\bbm}{\begin{bmatrix}}
\newcommand{\ebm}{\end{bmatrix}}
\newcommand{\chC}{\mathcal \hC}
\newcommand{\chK}{\mathcal \hK}
\newcommand{\chB}{\mathcal \hB}
\newenvironment{customlegend}[1][]{%
    \begingroup
    \csname pgfplots@init@cleared@structures\endcsname
    \pgfplotsset{#1}%
}{  \csname pgfplots@createlegend\endcsname
    \endgroup
}
\def\addlegendimage{\csname pgfplots@addlegendimage\endcsname}
\newcommand{%
    \tikzsetnextfilename{Figures/}%
    \input{Figures/.tikz}%
}[1]{%
    \tikzsetnextfilename{Figures/#1}%
    \input{Figures/#1.tikz}%
}
\newlength\fheight
\newlength\fwidth
\newcommand{\TheTitle}{Identification of Dominant Subspaces for Linear Structured Parametric  Systems and Model Reduction}
\newcommand{\TheShortTitle}{Identification of Dominant Subspaces  and MOR}
\newcommand{\TheAuthors}{Peter Benner, Pawan Goyal, and Igor Pontes Duff}
\headers{\TheShortTitle}{\TheAuthors}
\title{{\TheTitle}\thanks{Submitted to the editors \today.
}}
\author{
	  Peter Benner\thanks{Max Planck Institute for Dynamics of Complex Technical Systems, Sandtorstra\ss e 1, 39106 Magdeburg, Germany; (\email{benner@mpi-magdeburg.mpg.de}).}
	  \and
	  Pawan Goyal\thanks{Corresponding author. Max Planck Institute for Dynamics of Complex Technical Systems, Sandtorstra\ss e 1, 39106 Magdeburg, Germany; (\email{goyalp@mpi-magdeburg.mpg.de}).}
	  \and 
	    Igor Pontes Duff\thanks{Max Planck Institute for Dynamics of Complex Technical Systems, Sandtorstra\ss e 1, 39106 Magdeburg, Germany; (\email{pontes@mpi-magdeburg.mpg.de}).}
      }
\numberwithin{equation}{section}
\numberwithin{theorem}{section}
\numberwithin{figure}{section} 
\numberwithin{algorithm}{section}
\newtheorem{remark}{Remark}[section]
\newtheorem{conject}{Conjecture}[section]
\newtheorem{assump}{Assumption}[section]
\begin{document}

\maketitle
\begin{abstract}
In this paper, we discuss a novel model reduction framework for generalized linear systems. The transfer functions of these systems are assumed to have a special structure, e.g., coming from second-order linear systems and time-delay systems, and they may also have parameter dependencies. Firstly, we investigate the connection between classic interpolation-based model reduction methods with the reachability and observability subspaces of linear structured parametric systems. We show that if enough interpolation points are taken, the projection matrices of interpolation-based model reduction encode these subspaces. As a result,  we are able to identify the dominant reachable and observable subspaces of the underlying system. Based on this, we propose a new model reduction algorithm combining these features leading to reduced-order systems.  Furthermore, we pay special attention to computational aspects of the approach and discuss its applicability to a large-scale setting. We illustrate the efficiency of the proposed approach with several numerical large-scale benchmark examples. 
\end{abstract}
\begin{keywords}
  Model order reduction, structured linear systems, controllability and observability, interpolation, parametric systems
\end{keywords}

\begin{AMS}
  15A69, 34C20, 41A05, 49M05, 93A15, 93C10, 93C15.
\end{AMS}

\section{Introduction}
In this paper, we consider linear structured  parametric systems, whose transfer functions are of the form:
\begin{equation}\label{eq:gen_trans}
\bH(s,\bp) = \mathcal C(s,\bp) \mathcal K(s,\bp)^{-1} \mathcal B(s,\bp),
\end{equation}
where 
\begin{equation}\label{eq:sys_matrices}
\begin{aligned}
\mathcal C(s,\bp) &= \sum_{i=1}^k\gamma_i(s,\bp)\bC_i,&
\mathcal K(s,\bp) &= \sum_{i=1}^l\kappa_i(s,\bp)\bA_i,&
\mathcal B(s,\bp) &= \sum_{i=1}^m\beta_i(s,\bp)\bB_i,
\end{aligned}
\end{equation}
in which $\bA_i \in \Rnn$, $\bB_i \in \Rnm$, $\bC_i\in \Rpn$ are constant matrices, $s$ take values on the imaginary axis, and $\bp = \bbm p^{(1)},\ldots,p^{(d)} \ebm \in \Omega^d$ are the system parameters. $\kappa_i(s,\bp), \beta_i(s,\bp)$ and  $\gamma_i(s,\bp)$ are functions of $s \in \C$ and $\bp\in \Omega^d$. Additionally, the restrictions $\kappa_i(\cdot,\bp),  \beta_i(\cdot,\bp)$ and  $\gamma_i(\cdot,\bp)$ are assumed to be meromorphic functions.  In this paper, we also assume that $\bH(s,\bp)$ is a strictly proper function for all parameters, i.e., $\lim\limits_{s\rightarrow  \pm i\infty }\bH(s,\bp) = 0$. 
The system~\eqref{eq:gen_trans} covers a large class of linear systems, arising in various science and engineering applications, e.g., classical linear systems, second-order systems, time-delay systems, integro-differential systems, and their parameter-dependent variants. 

In order to illustrate the class of systems \eqref{eq:gen_trans}, we consider a dynamical system arising in computational electro-magnetics presented in \cite{morFenB10}.  A discretized system can be obtained by the spatial discretization of electromagnetic field equations, describing the electro-dynamical behavior of microwave devices when the surface losses are included in a physical model. The transfer function of the system has a very particular structure; precisely, it has a fractional integrator and takes the form:
\begin{equation}\label{eq:elect_TF}
\bH(s) = \sqrt{s}\bB^T\left(s^2\bI - \frac{1}{\sqrt{s}}\bD +\bA\right)^{-1}\sqrt{s}\bB.
\end{equation}
If the above equation is compared with the form given in \cref{eq:gen_trans} for a fixed parameter, then the matrices $\cC(s), \cK(s)$ and $\cB(s)$ can be given by
\[ \cC(s) = \sqrt{s}\bB^T,  \quad \cK(s) = \left(s^2\bI - \frac{1}{\sqrt{s}}\bD +\bA\right), \quad ~\, ~~\cB(s) = \sqrt{s}\bB,  \]
and 
\begin{align*}
\gamma_1(s) = \beta_1(s) = \sqrt{s}, \quad   
\kappa_1(s) = s^2, \quad& \kappa_2(s) = -\frac{1}{\sqrt{s}}, \quad \kappa_3(s) \equiv 1,  \\ \bB_1 = \bC_1^T = \bB, \quad  \bA_1 = \bI, \quad&  \bA_2 = \bD \quad  \text{and}~\, \bA_3 = \bA.
\end{align*}
Hence, the transfer function \eqref{eq:elect_TF} fits into our framework \eqref{eq:gen_trans}.

Model order reduction (MOR) has been studied extensively in the literature for some classes of linear systems, see e.g.,~\cite{morAnt05,morBenMS05} for standard linear systems, \cite{morBenS11,morChaLVD06,morEidSLetal07,morReiS08} for second-order systems, \cite{jarlebring2013model,plischke2005transient} for time-delay systems, and the review paper  \cite{BenGS15} for parametric systems.  Furthermore, MOR techniques have been investigated by several researchers for the generalized linear systems \eqref{eq:gen_trans}. For a fixed parameter, balanced truncation has been proposed in \cite{breiten2016structure}. The method requires solving the system Gramians, namely reachability and observability Gramians, which can be  a computationally challenging task in a large-scale setting. Another popular MOR method, transfer function interpolation, has also been studied   \cite{morAntBG10,morBeaG09}, where for a given set of interpolation points, it is shown how to construct an interpolating reduced-order system while preserving the system structure. However, \cite{morAntBG10,morBeaG09} leave an important open problem  about the choice of a good set of interpolation points. Furthermore, we would like to mention that a data-driven approach for structured non-parametric systems has been  studied in \cite{schulze2018data}. Nevertheless, the construction  of the structured reduced-order system is not a straightforward task; more importantly, it is not clear how to construct a reachable and observable system.  

In this paper, we discuss the connection between interpolation-based MOR methods with the reachable and observable subspaces of linear structured parametric systems. We show that if enough interpolation points are taken, the projection matrices of interpolation-based model reduction encode these subspaces.  As a consequence, we propose an approach to construct reduced-order systems preserving the common subspaces containing the most reachable as well as the most observable states. This approach can be seen as a combination of the interpolation-based method in \cite{morAntBG10} and some inspiration from the Loewner framework for first-order systems \cite{morMayA07}.  

The precise structure of the paper is as follows. 
In the subsequent section, we discuss the construction of interpolating reduced-order systems for \eqref{eq:gen_trans} for a given set of interpolation points $s$ and  parameters $\bp$. Thereafter, in \Cref{sec:ContObserv} we define the concepts of reachability and observability for  linear structured parametric systems and connect them with the interpolation-based MOR methods. Subsequently, by  combining both features, we discuss the construction of reduced-order systems keeping the subspaces of the most reachable and observable states simultaneously. In \Cref{sec:numerical}, we illustrate the efficiency of the approach using several benchmark examples and finally conclude with future avenues.  In the rest of the paper, we make use of the following notation.
\begin{itemize}
\item $\texttt{svd}\{\cdot\}$ denotes the singular value decomposition (SVD) of a matrix. 
\item By using \matlab~notation, we denote the first $l$ columns of a matrix $\bV$ by  $\bV(:,1 : l)$.
\item Let $\cV$ be a subspace of $\C^n$ (or $\R^n$).  We denote the orthogonal subspace of $\cV$ by $\cV^{\perp}$. 
\end{itemize}

\section{Preliminary Work}\label{sec:Background}
In this section, we briefly recap an interpolatory framework to construct reduced-order systems. Let us consider  linear systems whose input-output mappings (transfer functions) are given in \eqref{eq:gen_trans}.  Our goal is to construct reduced-order systems having a similar structure using Petrov-Galerkin projection as follows:
\begin{equation}\label{eq:gen_parametric_TF}
\hbH(s,\bp) = \hat{\cC}(s,\bp) \mathcal{\hK}(s,\bp)^{-1} \mathcal{\hB}(s,\bp),
\end{equation}
where 
\begin{align}\label{eq:reduced_generalized}
\mathcal \hC(s,\bp) &= \cC(s,\bp)\bV,&
\mathcal \hK(s,\bp) &= \bW^T \cK(s,\bp)\bV,&
\mathcal \hB(s,\bp) &= \bW^T \bB(s,\bp)\bV.
\end{align}
We aim at determining the matrices $\bV$ and $\bW$ in a way that the resulting reduced-order system interpolates a given set of interpolation points for $s$ and $\bp$. This problem was considered in \cite{morAntBG10}, where the idea of constructing interpolatory non-parametric structured systems \cite{morBeaG09} and classical linear parametric systems \cite{morBauBBetal11} was extended to the systems \eqref{eq:gen_parametric_TF}. However, the authors in \cite{morAntBG10} present the framework for linear structured parametric systems, where the functions given in \eqref{eq:sys_matrices} can be decomposed as $q(s,\bp) := q_s(s)q_p(\bp)$. In the following theorem, we present a variation of this result where this decomposition is not required. 
\begin{theorem}\label{theo:InterpParStruct}
	Let $\bH(s,\bp)$ be a transfer function as in \eqref{eq:gen_trans}. Consider interpolation points $\{\sigma_i,\bp^r_{i}\}$ and $\{\mu_i,\bp^l_{i}\}$, $i \in \{1,\ldots,r\}$, such that $\cK(s,\bp)$ is invertible for $\{s,\bp\} \in \{\sigma_i,\bp_i^r\} \cup \{\mu_i,\bp_i^l\}$.  Furthermore, let the projection matrices $\bV, \bW \in \C^{n\times r}$ be as follows:
	\begin{subequations}\label{eq:ProjMatrices}
		\begin{align}
		\underset{i= 1,\ldots,r}{\spann} \left\{ \mathcal K(\sigma_i,\bp^l_{i}) ^{-1}\mathcal B(\sigma_i,\bp^r_{i})\right\} &\subseteq 	\range{\bV},\\
		\underset{i= 1,\ldots,r}{\spann} \left\{ \mathcal K(\mu_i,\bp^r_{i}) ^{-T}\mathcal C(\sigma_i,\bp^l_{i})^T\right\} &\subseteq 	\range{\bW}.
		\end{align}
	\end{subequations}
	If the reduced matrices are computed as shown in \eqref{eq:reduced_generalized}, then the following conditions are satisfied, for $i =1, \dots, r$:
	\begin{subequations}\label{eq:interpolationGenCond}
		\begin{align}
		\bH(\sigma_i,\bp^l_{i}) &= \hbH(\sigma_i,\bp^l_{i}),\label{eq:interpolationGenCond1}\\
		\bH(\mu_i,\bp^r_{i})    &= \hbH(\mu_i,\bp^r_{i}).\label{eq:interpolationGenCond2}
		\end{align}
	\end{subequations}
	Moreover, if $\mu_i = \sigma_i$, and $\bp^{l}_i = \bp^{r}_i =: \bp_i$, for $i \in \{1,\ldots,r\}$, and  $\bH(s,\bp)$ and $\hbH(s,\bp)$ are differentiable at $\sigma_i$ and $\bp_i$ for $i\in \{1,\ldots,r\}$, then along with \eqref{eq:interpolationGenCond}, the following conditions are satisfied:
	
	\begin{subequations}\label{eq:deri_info}
		\begin{align}
		\dfrac{d}{ds}\bH(\sigma_i,\bp_i) 	&= \dfrac{d}{ds}\hbH(\sigma_i,p_i),\label{eq:deri_info1}\\
		\nabla_\bp\bH(\sigma_i,\bp_i) 		&= \nabla_\bp\hbH(\sigma_i,\bp_i).\label{eq:deri_info2}
		\end{align}
	\end{subequations}
\end{theorem}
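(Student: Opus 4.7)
The plan is to use the standard oblique-projector error representation for Petrov--Galerkin model reduction and then exploit a symmetric bilinear form to make the Hermite-type statement fall out of the product rule.

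\textbf{Step 1 (Projector and error formula).} I would introduce the $(s,\bp)$-dependent matrix $P(s,\bp) := \bV(\bW^T\cK(s,\bp)\bV)^{-1}\bW^T$, which is well-defined whenever $\bW^T\cK(s,\bp)\bV$ is invertible (and in particular at all stated interpolation points). Writing $\cK^{-1} = \cK^{-1}(I - \cK P) + P$ and using the algebraic identity $P\cK P = P$, one obtains three equivalent error expressions
\begin{equation*}
\bH - \hbH \;=\; \cC(I - P\cK)\cK^{-1}\cB \;=\; \cC\cK^{-1}(I - \cK P)\cB \;=\; \cC(I - P\cK)\cK^{-1}(I - \cK P)\cB,
\end{equation*}
where every matrix-valued function is evaluated at the same $(s,\bp)$.

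\textbf{Step 2 (Interpolation of values).} The factor $I - P(s,\bp)\cK(s,\bp)$ vanishes on $\range(\bV)$, while the transpose of $I - \cK(s,\bp)P(s,\bp)$ vanishes on $\range(\bW)$. By the right inclusion in \eqref{eq:ProjMatrices}, the relevant vector $\cK^{-1}\cB$ at the point associated with $\sigma_i$ lies in $\range(\bV)$, so the first error expression collapses, yielding \eqref{eq:interpolationGenCond1}. The left inclusion in \eqref{eq:ProjMatrices} forces the analogous vector $\cK^{-T}\cC^T$ attached to $\mu_i$ into $\range(\bW)$, so the second expression vanishes at $(\mu_i,\bp^r_i)$, giving \eqref{eq:interpolationGenCond2}.

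\textbf{Step 3 (Hermite-type interpolation).} When $\sigma_i=\mu_i$ and $\bp^l_i=\bp^r_i=:\bp_i$, both inclusions hold at the same point. Factor the symmetric error formula as $\bH - \hbH = f(s,\bp)\, g(s,\bp)$ with $f := \cC(I - P\cK)$ and $g := \cK^{-1}(I - \cK P)\cB$. The Step~2 computations, applied on each side, show that $f(\sigma_i,\bp_i)=0$ and $g(\sigma_i,\bp_i)=0$ simultaneously. For any scalar derivative $\partial$ with respect to $s$ or a component of $\bp$, the product rule gives $\partial(fg)\big|_{(\sigma_i,\bp_i)} = \partial f\cdot g + f\cdot\partial g = 0$, which delivers \eqref{eq:deri_info1} and \eqref{eq:deri_info2}. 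Meromorphicity of the $\kappa_i, \beta_i, \gamma_i$ together with invertibility of $\cK$ at the interpolation points ensures that $P$, $\cK^{-1}$, and hence $f$ and $g$ are smooth enough there for the product rule to apply.

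\textbf{Main obstacle.} The only nontrivial algebraic ingredient is the identity $P\cK P = P$, which says that $P\cK$ is an oblique projector onto $\range(\bV)$ along $\ker(\bW^T\cK)$; once this is in place, the rest of the argument is bookkeeping of the paired parameters $\bp^l_i,\bp^r_i$ in the spanning conditions and one application of the product rule. No delicate analytic estimate is required, because every conclusion reduces to a pointwise matrix identity at the interpolation points.
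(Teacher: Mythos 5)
Your proof is correct, and it is organized differently from the paper's. The paper establishes \eqref{eq:interpolationGenCond1} by a direct telescoping computation ($\hbH(\sigma_i,\bp^l_i)=\cC\bV\hat\cK^{-1}\bW^T\cK\cdot\cK^{-1}\cB=\cC\bV z=\bH(\sigma_i,\bp^l_i)$, using $\cK^{-1}\cB=\bV z$), treats \eqref{eq:interpolationGenCond2} "analogously," and then simply cites the literature for the derivative conditions \eqref{eq:deri_info}. You instead package the same two key facts --- $(I-P\cK)\bV=0$ and $\bW^T(I-\cK P)=0$ for $P=\bV(\bW^T\cK\bV)^{-1}\bW^T$ --- into the oblique-projector error representation $\bH-\hbH=\cC(I-P\cK)\cK^{-1}(I-\cK P)\cB$, which buys you a genuinely self-contained proof of the Hermite part: the paper never actually proves \eqref{eq:deri_info}, whereas your doubly-vanishing factorization plus the product rule does. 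Two small points to tighten. First, your claim that $f:=\cC(I-P\cK)$ vanishes at the coalesced point does not follow verbatim from the Step~2 computation (which shows $\cC\cK^{-1}(I-\cK P)=0$, a different matrix); you need the one-line identity $\cC(I-P\cK)=\bigl[\cC\cK^{-1}(I-\cK P)\bigr]\cK$, or equivalently $\bW^T\cK(I-P\cK)=0$, to conclude $f=0$ there. Second, both you and the paper implicitly assume $\bW^T\cK(s,\bp)\bV$ is invertible at the interpolation points (otherwise $\hbH$ is not even defined there); it is worth stating this as a hypothesis. Neither issue is a gap in substance --- the argument goes through --- and your handling of the $\bp^l_i$/$\bp^r_i$ pairing correctly reflects the intended statement despite the index typos in \eqref{eq:ProjMatrices}.
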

\begin{proof}
	The theorem can be proven exactly along the lines given in \cite{morAntBG10}. We begin by proving \eqref{eq:interpolationGenCond1}. We have
	\begin{align*}
	\hat \bH(\sigma_i,\bp^l_{i}) &= \hat \cC(\sigma_i,\bp^l_{i})\hat \cK(\sigma_i,\bp^l_{i})^{-1}\hat \cB(\sigma_i,\bp^l_{i})\\
	& =  \cC(\sigma_i,\bp^l_{i}) \bV\hat \cK(\sigma_i,\bp^l_{i})^{-1}\bW^T \cB(\sigma_i,\bp^l_{i}) \\ 
	& =  \cC(\sigma_i,\bp^l_{i}) \bV\hat \cK(\sigma_i,\bp^l_{i})^{-1}\bW^T \cK(\sigma_i,\bp^l_{i}) \underbrace{ \cK(\sigma_i,\bp^l_{i}) ^{-1} \cB(\sigma_i,\bp^l_{i})}_{\in \range{\bV}}\\
	& =  \cC(\sigma_i,\bp^l_{i}) \bV\hat \cK(\sigma_i,\bp^l_{i})^{-1}\bW^T \cK(\sigma_i,\bp^l_{i}) \bV z\\
	& = \cC(\sigma_i,\bp^l_{i}) \bV z
	= \cC(\sigma_i,\bp^l_{i}) \cK(\sigma_i,\bp^l_{i}) ^{-1} \cB(\sigma_i,\bp^l_{i}) = \bH(\sigma_i,\bp^l_{i}).
	\end{align*}
	Analogously, we can prove \eqref{eq:interpolationGenCond2}. Furthermore, we can prove \eqref{eq:deri_info1} and \eqref{eq:deri_info2} along the lines given in \cite{morBeaG09} and \cite{morBauBBetal11}, respectively. For the sake of brevity, we refrain from providing a complete proof. 
\end{proof}

In the previous theorem, we have seen how an interpolatory reduced-order system can be constructed for a given set of interpolation points. However, a good choice of interpolation points for both frequency $(s)$ and the parameters $(\bp)$ remains an open question. Therefore, in this work, we show that for enough interpolation points, we can determine the important subspaces, leading to a good quality of reduced-order systems. For this, in what follows, we first discuss the concepts of reachability and observability for dynamical systems.

\section{Reachability, Observability and Reduced-Order Systems}\label{sec:ContObserv}

This section aims at showing the connection of the Petrov-Galerkin projection matrices $\bV$ and $\bW$ in \eqref{eq:ProjMatrices} (\cref{theo:InterpParStruct}) with the classical concepts of reachability and observability of dynamical systems. Based on this, we can identify the states that are simultaneously least reachable and least observable. This leads us to an algorithm which is a combination of interpolation and SVD techniques, enabling us to construct reduced-order systems for structured parametric systems. We begin here by briefly revisiting some results for first-order linear systems.   

\subsection{Background on First-order Systems}
The transfer function of a first-order system  is given by
\begin{equation}\label{eq:FirstOrderTF}
\bH_{\texttt{fo}}(s) =  \bC\left(s\bI - \bA\right)^{-1}\bB, \quad \text{with~\,~  $\bA \in \C^{n \times n}$,  $\bB \in \C^{n \times m}$ and $\bC \in \C^{p \times n}$.}
\end{equation}
We note that the reachable subspace $\cV_{\scaleto{{R}}{3.25pt}}$ and the observable subspace $\cW_{\scaleto{{O}}{3.25pt}}$ of the system~\eqref{eq:FirstOrderTF} are given  by the smallest subspaces of $\C^n$ such that 
\[ e^{\bA t}\bB \in \cV_{\scaleto{{R}}{3.25pt}}\quad \text{and} \quad   e^{\bA^T t}\bC^T \in \cW_{\scaleto{{O}}{3.25pt}}~\,\text{for every $t\geq 0$.}\]
This essentially follows from standard proofs relating reachability and observability of linear-time invariant systems with the rank of the Kalman reachability and observability matrices defined as follows:
\begin{subequations} \label{eq:KalmanMatrices}
\begin{align} 
\bM_{\scaleto{{R}}{3.25pt}}(\bA,\bB) &= \begin{bmatrix}
\bB &\bA\bB &\bA^2\bB  & \dots & \bA^{n-1}\bB
\end{bmatrix},\,~\text{and} \\ ~\,
\bM_{\scaleto{{O}}{3.25pt}}^T(\bC,\bA) &= \begin{bmatrix}
\bC^T & \bA^T\bC^T &(\bA^2)^T\bC^T  & \dots & (\bA^{n-1})^T\bC^T
\end{bmatrix},
\end{align}
\end{subequations}
see \cite[Chapter 2]{dullerud2013course} for more details. The unreachable subspace, which is the orthogonal complement of $\cV_{\scaleto{{R}}{3.25pt}}$ and denoted by $\cV_{\scaleto{{R}}{3.25pt}}^{\perp}$, consists of the states $\bq_{ur}\in \C^n$  such that $\bq_{ur}^Te^{\bA t}\bB =0$ for every $t\geq 0$. Similarly, the unobservable subspace, characterized by $\cW_{\scaleto{{O}}{3.25pt}}^{\perp}$, consists of the states $\bq_{uo}\in \C^n$ such that $\bC e^{\bA t}\bq_{uo} =0$, for every $t\geq 0$.  By means of the Laplace transform, the reachability and observability subspaces can be seen as  the smallest subspaces of $\C^n$ such that
 \begin{equation}\label{eq:CnContrFirstOrder} 
 (s\bI-\bA)^{-1}\bB \in \cV_{\scaleto{{R}}{3.25pt}}, \quad \text{and} \quad  (s\bI-\bA)^{-T}\bC^T \in \cW_{\scaleto{{O}}{3.25pt}}, ~\,\text{for every $s \in i\R$.}
 \end{equation}
 Additionally, we note that the system \eqref{eq:FirstOrderTF} is reachable if  $\cV_{\scaleto{{R}}{3.25pt}} = \C^n$, and observable if $\cW_{\scaleto{{O}}{3.25pt}} = \C^n$.   A classical result in system theory is that if the system \eqref{eq:FirstOrderTF} is not reachable or not observable, then there exists a system of lower-order, having the same transfer function as the original one. Indeed, the unreachable or unobservable states,  denoted by $\cV_{\scaleto{{R}}{3.25pt}}^{\perp}$ and  $\cW_{\scaleto{{O}}{3.25pt}}^{\perp}$, respectively, can be removed from the dynamics, without changing the transfer function. Moreover, if the system is reachable and observable, then it is minimal, i.e., there exists no lower-order realization for the original transfer function, see \cite[Chapter 2]{dullerud2013course}.
 
 For first-order systems,  a well-known characterization of reachable and observable spaces are given by $\cV_{\scaleto{{R}}{3.25pt}} = \range{\bM_{\scaleto{{R}}{3.25pt}}(\bA,\bB)}$ and $\cW_{\scaleto{{O}}{3.25pt}} = \range{\bM_{\scaleto{{O}}{3.25pt}}^T(\bC,\bA)},$  where $\bM_{\scaleto{{R}}{3.25pt}}(\bA,\bB)$ and $\bM_{\scaleto{{O}}{3.25pt}}^T(\bC,\bA)$ are, respectively, the Kalman reachability and observability matrices given in \eqref{eq:KalmanMatrices}.
Furthermore, the authors in \cite{morAndA90} provided a different characterization of these subspaces which is closely related to the interpolation problem. In particular, they have  shown that
$\cV_{\scaleto{{R}}{3.25pt}} = \range{\bV}$ and  $\cW_{\scaleto{{O}}{3.25pt}} = \range{\bW}$, where
\begin{align*} 
\bV &= \begin{bmatrix}(\sigma_1\bI -\bA)^{-1}\bB &(\sigma_2\bI -\bA)^{-1}\bB  & \dots & (\sigma_{N}\bI -\bA)^{-1}\bB \end{bmatrix}, ~\text{and} 
\\ 
\bW &= \begin{bmatrix} (\sigma_1\bI -\bA)^{-T}\bC^T &(\sigma_2\bI -\bA)^{-T}\bC^T  & \dots & (\sigma_{N}\bI -\bA)^{-T}\bC^T \end{bmatrix}
\end{align*}
with $N\geq n$ and  $\sigma_k \in i\R \cap \Lambda(\bA), k\in \{1,\ldots, N\}$, are distinct points, where $\Lambda(\cdot)$ denotes the spectrum of a matrix. 
Notice that the  above matrices $\bV$ and $\bW$ are the particular matrices $\bV$ and $\bW$ from \Cref{theo:InterpParStruct} when the original system is first-order non-parametric. Moreover, also in  \cite{morAndA90} and more recently in \cite{morMayA07}, the authors have shown that the matrix $\begin{bmatrix} \bW^T\bV & \bW^T\bA\bV \end{bmatrix}$ encodes the dimension of the minimal order realization of the original system, i.e.,
\[\rank{\begin{bmatrix}
	\bW^T\bV & \bW^T\bA\bV
	\end{bmatrix}} = \left\{\begin{array}{l}
\text{order of the minimal realization obtained by} \\
\text{removing unreachable and unobservable states}
\end{array}\right. \]
Furthermore, in \cite{morMayA07}, an algorithm based on the SVD is provided in order to get the minimal realization or reduced-order systems by Petrov-Galerkin projections. 

Inspired by the above discussion on first-order systems, in what follows, we first extend some of the results for linear structured systems \eqref{eq:gen_trans}, and propose an algorithm, allowing us to construct reduced-order systems by removing unreachable and unobservable subspaces.

\subsection{Reachability, Observability, and Reduced Systems for Linear Structured Systems}

As mentioned earlier, unreachable or unobservable states can be removed from first-order systems~\eqref{eq:FirstOrderTF} such that the resulting lower-order system has the same transfer function as the original one. In this section, we study how these notions can be extended to the class of structured systems \eqref{eq:gen_trans}. For the clarity of exposition, we start our discussion with the non-parametric case, i.e.,  linear structured systems of the form 
\begin{equation}\label{eq:gen_trans_nonpar}
\bH(s) = \mathcal C(s) \mathcal K(s)^{-1} \mathcal B(s).
\end{equation}

Dynamical systems can be represented using different frameworks, e.g., as a semi-group in a Hilbert space, as a linear system over a ring of operators and as a functional differential equation.  Depending on the nature of representation or the considered problem, there exist various definitions for reachability and observability. 

For the infinite-dimensional control community, dynamical systems are represented in the infinite-dimensional setting, where the state-space might be seen as an (infinite-dimensional) Hilbert space associated with a semi-group. There, the notions of exact and approximate reachability and observability play an important role, see, e.g., \cite{fuhrmann1976exact, vidyasagar1970controllability}  and \cite[Chapter 4]{curtain2012introduction}. However, in this setup, the dynamical systems are no longer seen as functional differential equations. Hence, discretizing the infinite-dimensional Hilbert space leads to finite-dimensional first-order reduced-order systems that do not preserve the structure in  \eqref{eq:gen_trans}.  

Another point of view is the one in algebraic system theory, which considers dynamical systems as linear systems over a ring, see, e.g., \cite{kamen1978, kamen1991linear}. In this setting, the reachability and observability concepts rely on a rank condition over a ring, e.g., strong and weak reachability/observability, see \cite{lee1981observability,morse1976ring,sontag1976linear}.  However, since these concepts  depend on the underlying ring, we are not aware of a way  to adapt it to a structure-preserving reduction in the Petrov-Galerkin projection framework.

In this paper, we define a weaker notion of reachability/observability that relies only on linear algebra concepts  and the matrices of the realization of the system \eqref{eq:gen_trans_nonpar}. These concepts are related to the realization of the transfer function \eqref{eq:gen_trans_nonpar} as a functional differential equation, see \cite{breiten2016structure,gripenberg1990volterra}.   

\begin{definition}\label{def:Cncontr} The $\C^n$--reachable subspace $\cV_{\scaleto{{R}}{3.25pt}}$ associated to the pair of functions $(\cK(s), \cB(s))$ is the smallest subspace of $\C^n$ which contains $\cK(s)^{-1}\cB(s)$ for all $s \in i\R$. In other words, if $\cV_{\scaleto{{R}}{3.25pt}} = \range{\bV_{\scaleto{{R}}{3.25pt}}}$,  where $\bV_{\scaleto{{R}}{3.25pt}}\in \C^{n \times r_{\scaleto{{R}}{3.25pt}}}$ is a full column rank matrix and $r_{\scaleto{{R}}{3.25pt}} \leq n$, we have
\begin{equation}\label{eq:Cncontr}
	\cK(s)^{-1}\cB(s) = \bV_{\scaleto{{R}}{3.25pt}} y_{\scaleto{{R}}{3.25pt}}(s),
\end{equation}
in which  $y_{\scaleto{{R}}{3.25pt}}(s)\in \C^{r_{\scaleto{{R}}{3.25pt}} \times m}$ is a matrix of meromorphic function. We say that $(\cK(s), \cB(s))$  is $\C^n$--reachable if $\cV_{\scaleto{{R}}{3.25pt}} = \C^n$. 
\end{definition}
The $\C^n$--unreachable subspace consists of the states $\bq_{ur} \in \C^n$  such that \[\bq_{ur}^T\cK(s)^{-1}\cB(s) =0\quad \forall s \in i\R. \] As a consequence, it is characterized by $\cV_{\scaleto{{R}}{3.25pt}}^{\perp}$.  It is worth noting that, in the case of first-order systems \eqref{eq:FirstOrderTF}, $\cK(s)^{-1}\cB(s) = (s\bI-\bA)^{-1}\bB$ and the above definition is a natural extension of \eqref{eq:CnContrFirstOrder}.  

We recall that the transfer between the input to the state for system \eqref{eq:gen_trans_nonpar} is given by 
	\[\bx(s) = \cK(s)^{-1}\cB(s)\bu(s). \]
Hence, 	the $\C^n$--reachable subspace $\cV_{\scaleto{{R}}{3.25pt}}$ correspond to the smallest subspace that contains the states $\bx(s)$ for every $s$. This justifies the use of the reachability terminology.

To connect the frequency and time-domains, let $\bM(t)$ denote the inverse Laplace transform of $\cK(s)^{-1}\cB(s)$. Hence, by definition $\bM(t) = \bV_{\scaleto{{R}}{3.25pt}} \bY(t)$ $\forall t\geq 0$, where $\bY(t)$ is the inverse Laplace transform of $y_{\scaleto{{R}}{3.25pt}}(s)$ in \eqref{eq:Cncontr}. It can be noticed that, for real systems, i.e., systems where $\bM(t)$ is a real function, the matrix $\bV_{\scaleto{{R}}{3.25pt}}$ can be chosen as a real matrix. Also, in the context of time-delay systems, \Cref{def:Cncontr}  is  equivalent to the one of  point-wise complete controllability, see \cite{weiss1967controllability,yi2008controllability}  and of $\R^n$ controllability~\cite[Chapter 2]{kamen1978}. Similarly, we use the following concept of observability.

\begin{definition}\label{defLCnObserv} The $\C^n$--observable subspace $\cW_{\scaleto{{O}}{3.25pt}}$ associated to the pair of functions $(\cC(s), \cK(s))$  is the smallest subspace of $\C^n$ which contains $\cK(s)^{-T}\cC(s)^T$,  for all $s \in i\R$. In other words,  if  $\cW_{\scaleto{{O}}{3.25pt}} =\range{\bW_{\scaleto{{O}}{3.25pt}}}$ , where $\bW_{\scaleto{{O}}{3.25pt}}\in \C^{n \times r_{\scaleto{{O}}{3.25pt}}}$ is a full rank matrix and $r_{\scaleto{{O}}{3.25pt}}\leq n$, we have
	\[\cC(s)\cK(s)^{-1} =y_{\scaleto{{O}}{3.25pt}}(s) \bW_{\scaleto{{O}}{3.25pt}}^T , \]
in which $y_{\scaleto{{O}}{3.25pt}}(s)\in \C^{p \times r_{\scaleto{{O}}{3.25pt}} }$ is a matrix of meromorphic functions. We say that $(\cC(s), \cK(s))$  is $\C^n$--observable if $\cW_{\scaleto{{O}}{3.25pt}} = \C^n$.
\end{definition}
The $\C^n$--unobservable subspace consists of the states $\bq_{uo} \in \C^n$  such that \[\cC(s)\cK(s)^{-1}\bq_{uo} =0 \quad \forall s \in i\R. \]
As a consequence, it is characterized by $\cW_{\scaleto{{O}}{3.25pt}}^{\perp}$. Similarly, in the case of first-order systems \eqref{eq:FirstOrderTF}, $\cK(s)^{-T}\cC^T(s) = (s\bI-\bA)^{-T}\bC^T$ and the above definition is  a natural extension of \eqref{eq:CnContrFirstOrder} for observability.  As for the $\C^n$ reachability space, $\bW_{\scaleto{{O}}{3.25pt}}$ can be chosen to be real if the original system represents a real dynamics.

Analogous to first-order systems \eqref{eq:FirstOrderTF}, if a structured system \eqref{eq:gen_trans_nonpar} is not $\C^n$--reachable or $\C^n$--observable, there exists a lower-order system that has the same transfer function. We state the result rigorously  in the following theorem.
\begin{theorem}\label{prop:ControlObservRemov} Let $(\cC(s), \cK(s), \cB(s))$ be a linear structured system of order $n$ as shown in \eqref{eq:gen_trans_nonpar}. If either $(\cC(s),\cK(s))$ is not $\C^n$--observable or $(\cK(s), \cB(s))$ is not $\C^n$--reachable, then there exists a lower-order structured realization $(\chC(s), \chK(s), \chB(s))$ of order $r<n$, realizing the original transfer function, i.e.,
	\[ \cC(s)\cK(s)^{-1}\cB(s) = \chC(s)\chK(s)^{-1}\chB(s), \quad \forall s \in i\R. \]
\end{theorem}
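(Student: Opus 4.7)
The plan is to treat the two hypotheses symmetrically and focus on the case in which $(\cK(s), \cB(s))$ is not $\C^n$--reachable; the non--$\C^n$--observable case will follow by dualizing (transposing the matrices and swapping the roles of $\cB$ and $\cC^T$). By \Cref{def:Cncontr}, there exists a full column-rank matrix $\bV_{\scaleto{{R}}{3.25pt}} \in \C^{n\times r}$ with $r < n$ and a meromorphic matrix $y_{\scaleto{{R}}{3.25pt}}(s)$ such that $\cK(s)^{-1}\cB(s) = \bV_{\scaleto{{R}}{3.25pt}}\, y_{\scaleto{{R}}{3.25pt}}(s)$, equivalently $\cB(s) = \cK(s)\bV_{\scaleto{{R}}{3.25pt}}\, y_{\scaleto{{R}}{3.25pt}}(s)$. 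I will then construct a Petrov--Galerkin reduced realization of the form \eqref{eq:reduced_generalized} with $\bV := \bV_{\scaleto{{R}}{3.25pt}}$ and a suitable $\bW \in \C^{n\times r}$, setting $\chC(s) := \cC(s)\bV_{\scaleto{{R}}{3.25pt}}$, $\chK(s) := \bW^T\cK(s)\bV_{\scaleto{{R}}{3.25pt}}$, and $\chB(s) := \bW^T\cB(s)$. Left-multiplying $\cB(s) = \cK(s)\bV_{\scaleto{{R}}{3.25pt}}\, y_{\scaleto{{R}}{3.25pt}}(s)$ by $\bW^T$ yields $\chB(s) = \chK(s)\, y_{\scaleto{{R}}{3.25pt}}(s)$, so that, assuming $\chK(s)$ is invertible,
\[
\chC(s)\chK(s)^{-1}\chB(s) \;=\; \cC(s)\bV_{\scaleto{{R}}{3.25pt}}\, y_{\scaleto{{R}}{3.25pt}}(s) \;=\; \cC(s)\cK(s)^{-1}\cB(s).
\]

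The delicate step is the choice of $\bW$ so that $\chK(s)$ is invertible in the meromorphic sense. Since $\cK(s)$ is meromorphic and hence invertible outside a discrete set, and $\bV_{\scaleto{{R}}{3.25pt}}$ has full column rank, the matrix $\cK(s)\bV_{\scaleto{{R}}{3.25pt}}$ has full column rank for generic $s$. I will fix some $s_0 \in \C$ at which $\cK(s_0)$ is invertible and choose $\bW$ whose columns form an orthonormal basis of $\range{\cK(s_0)\bV_{\scaleto{{R}}{3.25pt}}}$ (for instance via a thin QR factorization). Then $\cK(s_0)\bV_{\scaleto{{R}}{3.25pt}} = \bW\,(\bW^T\cK(s_0)\bV_{\scaleto{{R}}{3.25pt}})$, and since the left-hand side has full column rank $r$ and $\bW$ is injective, $\bW^T\cK(s_0)\bV_{\scaleto{{R}}{3.25pt}}$ is an invertible $r\times r$ matrix. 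Consequently, the meromorphic function $s \mapsto \det(\bW^T\cK(s)\bV_{\scaleto{{R}}{3.25pt}})$ is non-zero at $s_0$, hence not identically zero, so $\chK(s)$ is invertible on a cofinite subset of $\C$. The identity derived above therefore holds as an identity of meromorphic functions and in particular for $s \in i\R$ outside a discrete pole set.

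Finally, I would verify that the structure \eqref{eq:sys_matrices} is preserved. Since
\[
\chK(s) = \sum_{i=1}^{l} \kappa_i(s)\,(\bW^T \bA_i \bV_{\scaleto{{R}}{3.25pt}}), \quad \chB(s) = \sum_{i=1}^{m} \beta_i(s)\,(\bW^T\bB_i), \quad \chC(s) = \sum_{i=1}^{k} \gamma_i(s)\,(\bC_i \bV_{\scaleto{{R}}{3.25pt}}),
\]
the scalar functions $\kappa_i,\beta_i,\gamma_i$ are retained verbatim and only the constant matrices are compressed by the projection, so the reduced realization is again of the prescribed structured form. The non--$\C^n$--observable case is obtained by applying the same construction to the dual triple $(\cB(s)^T, \cK(s)^T, \cC(s)^T)$, producing a fixed $\bW_{\scaleto{{O}}{3.25pt}}$ spanning $\cW_{\scaleto{{O}}{3.25pt}}$ together with an analogously chosen $\bV$. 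The main obstacle is precisely the one addressed in the second paragraph: lifting a single-point rank condition on $\cK(s_0)\bV_{\scaleto{{R}}{3.25pt}}$ to generic invertibility of $\chK(s)$, which is what allows an $s$-independent projector $\bW$ to deliver a well-defined structured reduced transfer function.
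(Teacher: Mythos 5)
Your proof follows essentially the same route as the paper's: extract the factorization $\cK(s)^{-1}\cB(s) = \bV_{\scaleto{{R}}{3.25pt}}\, y_{\scaleto{{R}}{3.25pt}}(s)$ from non-reachability, apply a Petrov--Galerkin projection with $\bV = \bV_{\scaleto{{R}}{3.25pt}}$, and observe that $\chK(s)^{-1}\chB(s) = y_{\scaleto{{R}}{3.25pt}}(s)$ forces the reduced transfer function to equal the original, with the observable case handled by duality. The one point where you go beyond the paper is worth noting: the paper merely \emph{assumes} a $\bW$ exists with $\bW^T\cK(s)\bV_{\scaleto{{R}}{3.25pt}}$ non-singular, whereas your construction of $\bW$ as an orthonormal basis of $\range{\cK(s_0)\bV_{\scaleto{{R}}{3.25pt}}}$ at a single regular point $s_0$, together with the observation that $\det\bigl(\bW^T\cK(s)\bV_{\scaleto{{R}}{3.25pt}}\bigr)$ is a meromorphic function that is non-zero at $s_0$ and hence not identically zero, actually establishes this and closes a small gap in the published argument (at the mild cost of the identity holding off a discrete exceptional set, which is all one can expect for meromorphic data).
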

\begin{proof} Let us first consider the case where the structured system is not $\C^n$--reachable. In this case, there exists a full rank matrix $\bV_{\scaleto{{R}}{3.25pt}} \in \C^{n \times r_{\scaleto{{R}}{3.25pt}}}$ such that $\cK(s)^{-1}\cB(s) = \bV_{\scaleto{{R}}{3.25pt}} z(s)$. Consider a matrix $\bW\in \R^{n \times r_{\scaleto{{R}}{3.25pt}}}$ and assume that $\bW^T\cK(s)\bV_{\scaleto{{R}}{3.25pt}} $ is non-singular. Hence, $(\bW^T\cK(s)^{-1}\bV_{\scaleto{{R}}{3.25pt}})^{-1}\bW^T\cB(s) = z(s)$ and we have 
	\begin{equation}\label{eq:1} 
	\cK(s)^{-1}\cB(s) = \bV_{\scaleto{{R}}{3.25pt}}(\bW^T\cK(s)^{-1}\bV_{\scaleto{{R}}{3.25pt}})^{-1}\bW^T\cB(s).  
	\end{equation}
	Thus,
	\[ \cC(s)\cK(s)^{-1}\cB(s) =   \hat{\cC}(s)\hat{\cK}(s)^{-1}\hat{\cB}(s) \]
	with 
	\[ \hat{\cC}(s) = \cC(s)\bV_{\scaleto{{R}}{3.25pt}} \in \C^{p \times r}, ~\,\hat{\cK}(s) = \bW^T\cK(s)\bV_{\scaleto{{R}}{3.25pt}} \in \C^{r \times r},~\,\text{and}~\, \hat{\cB}(s) = \bW^T\cB(s) \in \C^{r \times m}. \]
	Similarly, when the system is not $\C^n$--observable, it can be shown that there exists a lower-order realization. 
\end{proof}
The above result states that if a structured realization is not $\C^n$--reachable or $\C^n$--observable, then there exists a lower-order structured realization which represents the same transfer function.  Moreover, a lower-order realization can be obtained via Petrov-Galerkin projections.  Although the counterpart of \Cref{prop:ControlObservRemov} is valid for first-order systems as \eqref{eq:FirstOrderTF}, until now we have not been able to give a concrete proof for this result for structured systems. Hence, we leave the following statement as a conjecture.
\begin{conject}\label{conj:Minimality} Let $(\cC(s), \cK(s), \cB(s))$ be a linear structured system of order $n$ as given in \eqref{eq:gen_trans_nonpar}. If $(\cC(s), \cK(s))$ is $\C^n$--observable and $(\cK(s), \cB(s))$ is $\C^n$--reachable, then there is no lower-order realization, which can be constructed via Petrov-Galerkin projection that realizes the same transfer function.
\end{conject}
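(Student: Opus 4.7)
The approach I would take is proof by contradiction, mirroring the first-order case discussed in Subsection~3.1 but adapted to the structured setting. Suppose there exist $\bV, \bW \in \C^{n \times r}$ with $r < n$ and $\bW^T\cK(s)\bV$ invertible on $i\R$ such that
\[
\cC(s)\cK(s)^{-1}\cB(s) \;=\; \cC(s)\bV\bigl(\bW^T\cK(s)\bV\bigr)^{-1}\bW^T\cB(s), \qquad \forall s \in i\R.
\]
The goal is then to contradict either the $\C^n$--reachability of $(\cK(s), \cB(s))$ or the $\C^n$--observability of $(\cC(s), \cK(s))$.

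First I would try to exploit \Cref{theo:InterpParStruct} in a ``converse'' direction. Since the full and reduced transfer functions coincide on all of $i\R$, they interpolate at every admissible $s$ together with every order derivative. In the first-order case \cite{morAndA90,morMayA07}, this matching is known to force---via a Loewner/Hankel rank argument---the projection matrices to contain the reachable and observable subspaces. If the analogous statement could be proved for the structured case \eqref{eq:gen_trans_nonpar}, then one would obtain $\cK(s)^{-1}\cB(s)\in\range{\bV}$ and $\cK(s)^{-T}\cC(s)^T \in \range{\bW}$ for every $s$ at which $\cK(s)$ is invertible, and $\C^n$--reachability/observability would then give $\range{\bV}=\range{\bW}=\C^n$, contradicting $r<n$.

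If this converse proves inaccessible directly, a more algebraic route is to examine the residuals
\[
\bT(s) := \cK(s)^{-1}\cB(s) - \bV\bigl(\bW^T\cK(s)\bV\bigr)^{-1}\bW^T\cB(s),
\]
and its observability counterpart $\bS(s) := \cC(s)\cK(s)^{-1} - \cC(s)\bV\bigl(\bW^T\cK(s)\bV\bigr)^{-1}\bW^T$. Equality of transfer functions yields $\cC(s)\bT(s) = 0$ and $\bS(s)\cB(s) = 0$ identically in $s$. The columns of the subtracted term in $\bT(s)$ all lie in $\range{\bV}$, which has dimension at most $r<n$, whereas by $\C^n$--reachability the columns of $\cK(s)^{-1}\cB(s)$ span $\C^n$ as $s$ varies; hence $\bT(s)$ is nontrivial at some $s$. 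I would then try to propagate this non-triviality through $\cC(s)\bT(s)=0$ together with the structural decomposition~\eqref{eq:sys_matrices}---for example, by picking frequencies at which the scalar functions $\kappa_i$, $\beta_i$, $\gamma_i$ are linearly independent and isolating contributions from the constant matrices $\bA_i,\bB_i,\bC_i$---with the aim of producing a vector $\bq\neq 0$ such that $\cC(s)\cK(s)^{-1}\bq = 0$ for all $s$, which would violate $\C^n$--observability.

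The hardest part---and, I suspect, the reason the statement has been left as a conjecture---is precisely the coupling of the frequency variable in the identity $\cC(s)\bT(s) = 0$: $\C^n$--observability is a condition on the row span of $\cC(s)\cK(s)^{-1}$ aggregated over different values of $s$, whereas the residual identity only provides information at each individual $s$. Closing this gap seems to require a genuine generalization of the Loewner/Kalman realization machinery from rational matrix functions to the meromorphic families in \eqref{eq:sys_matrices}---a structured analogue of the classical fact that the rank of the Hankel operator equals the minimal realization order. Establishing such a tool, most likely under additional hypotheses on $\{\kappa_i\},\{\beta_i\},\{\gamma_i\}$ (for instance linear independence as meromorphic functions, or a McMillan-degree-like characterization of structured realizations), appears to be the real crux, and is where I would expect the proof to either succeed or expose a genuine counterexample.
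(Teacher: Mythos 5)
The statement you were asked to prove is left as an open conjecture in the paper: the authors explicitly write that they ``have not been able to give a concrete proof for this result for structured systems,'' and the remark immediately following the conjecture explains why --- the classical first-order argument rests on the Markov parameters $\bC\bA^{k}\bB$ and the rank of the associated Hankel matrix, and in the structured case these objects only carry information about an \emph{unstructured} first-order realization of $\bH(s)$, not about structured realizations obtainable by Petrov--Galerkin projection. So there is no paper proof to compare against, and your attempt, by your own admission, also stops short of one.

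Concretely, both of your routes end at the same unclosed gap. The ``converse interpolation'' route asserts that matching $\hat{\bH}(s)=\bH(s)$ on all of $i\R$ forces $\range{\bV}$ and $\range{\bW}$ to contain the reachable and observable subspaces; this is not what the first-order theory actually gives (there the conclusion $r\ge n$ comes from the McMillan degree / Hankel rank equality, not from a containment of projection subspaces), and no structured analogue of that rank machinery is established here or in the paper. The residual route is sound as far as it goes --- $\cC(s)\bT(s)=0$ for all $s$, and $\bT(s)\not\equiv 0$ by $\C^n$--reachability since its second term lives in the $r$-dimensional $\range{\bV}$ --- but the identity only says that the columns of the $s$-dependent matrix $\bT(s)$ are annihilated by $\cC(s)$ at each individual $s$, whereas violating $\C^n$--observability requires a single fixed $\bq\neq 0$ with $\cC(s)\cK(s)^{-1}\bq=0$ for \emph{all} $s$. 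Extracting such a $\bq$ from a family of $s$-dependent kernel vectors (and converting annihilation by $\cC(s)$ into annihilation by $\cC(s)\cK(s)^{-1}$) is exactly the step that would need a structured Hankel/Loewner theory, whose absence the paper itself identifies as the obstruction. Your diagnosis of the crux is accurate and essentially coincides with the authors' own remark, but the proposal does not constitute a proof of the conjecture, which remains open.
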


\begin{remark}
For first-order systems \eqref{eq:FirstOrderTF}, \Cref{conj:Minimality} is equivalent to the statement that a system is minimal if and only if it is reachable and observable. Its proof is based on the Markov parameters of the system, which are represented by $\bC\bA^{k}\bB \in \C^{p \times m}$ for $k\geq 0$, and the associated Hankel matrix, see, e.g., \cite[Theorem 2.28]{dullerud2013course}. However, in the structured parametric case, the classical Markov parameters $\bM_k \in \C^{p \times m}$, where $\bH(s) = \sum_{k=0}^{\infty} \bM_k/s^k$, give information about the first-order realization as \eqref{eq:FirstOrderTF} which realizes the transfer function $\bH(s)$. Hence, they are incompetent to the understanding of the structured minimal realization. Additionally,  the Markov parameters considered in the algebraic system theory depend on $s$ and $\bp$ and are  no longer elements of  $\C^{p \times m}$, see \cite{kamen1978}. Hence, a similar argument as for first-order systems does not hold here. For the moment, we let this be a potential research problem, and we will dedicate future work in the direction of finding a good framework where classical linear systems results can be extended to the structured parametric case. 
\end{remark}

We have shown that if we know reachable and observable subspaces $\cV_{\scaleto{{R}}{3.25pt}}$ and $\cW_{\scaleto{{O}}{3.25pt}}$, we are able to construct lower-order systems by removing the states that are unreachable or unobservable. The next step is to characterize these spaces using matrices $\bV$ and $\bW$ defined in \cref{theo:InterpParStruct}.  By definition, it is easy to observe that $\range{\bV} \subseteq  \cV_{\scaleto{{R}}{3.25pt}}$ and  $\range{\bW} \subseteq  \cW_{\scaleto{{O}}{3.25pt}}$. Additionally, there always exist $N \in \mathbb{N}$ and  interpolation points  $\sigma_1, \dots, \sigma_{N}$ such that  
\begin{subequations}\label{eq:VWnonpar}
\begin{align}
\bV &= \begin{bmatrix}
\cK(\sigma_1)^{-1}\cB(\sigma_1) & \cK(\sigma_2)^{-1}\cB(\sigma_2) & \dots & \cK(\sigma_N)^{-1}\cB(\sigma_N)\end{bmatrix}, \\
\bW  &= \begin{bmatrix}
\cK(\sigma_1)^{-T}\cC(\sigma_1)^T & \cK(\sigma_2)^{-T}\cC(\sigma_2)^T & \dots & \cK(\sigma_N)^{-T}\cC(\sigma_N)^T 
\end{bmatrix},  
\end{align}
\end{subequations}
and
\[\range{\bV} = \cV_{\scaleto{{R}}{3.25pt}} \quad \text{and} \quad  \range{\bW} = \cW_{\scaleto{{O}}{3.25pt}}. \]

In what follows, we show that $N$ can be fixed in some cases where the following assumption holds.
\begin{assump}[\bf at most $n_z$ zeros on the imaginary axis] \label{assumpt:FinZero} Given   $n_z \in \N$ and the functions $\cK(s)$ and $\cB(s)$, we say that the pair of functions  $(\cK(s),\cB(s))$ has at most $n_z$  zeros on the imaginary axis, with $n_z\in \N$, if, for every $\bw \in \C^n$  and $\bz \in \C^m$, the scalar meromorphic function
\[ \bF_{\bw,\bz}(s)  = \bw^T\cK(s)^{-1}\cB(s)\bz \]
has at most $n_z$ zeros on the imaginary axis.
\end{assump}

Hence, the following  theorem is an extension of the result for first-order systems from \cite[Lemma 3.1]{morAndA90} to linear structured systems.
\begin{theorem}\label{theo:FinNunZero} Let $(\cK(s),\cB(s))$ be a pair of matrix functions whose reachable subspace is $\cV_{\scaleto{{R}}{3.25pt}}$. Let us assume that the pair $(\cK(s),\cB(s))$ has at most $n_z$ zeros on the imaginary axis. Let
\[ \bV = \begin{bmatrix}
\cK(\sigma_1)^{-1}\cB(\sigma_1) & \cK(\sigma_2)^{-1}\cB(\sigma_2) & \dots & \cK(\sigma_N)^{-1}\cB(\sigma_N)\end{bmatrix},
\]
with $N\geq n_z$. Hence, 
\[\cV_{\scaleto{{R}}{3.25pt}} =  \range{\bV_{\scaleto{{R}}{3.25pt}}} = \range{\bV}.  \]
\end{theorem}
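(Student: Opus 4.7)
The plan is to prove both inclusions between $\range{\bV}$ and $\cV_{\scaleto{{R}}{3.25pt}}$. The inclusion $\range{\bV} \subseteq \cV_{\scaleto{{R}}{3.25pt}}$ is immediate from \Cref{def:Cncontr}: each column of $\bV$ has the form $\cK(\sigma_i)^{-1}\cB(\sigma_i)\mathbf{e}_j$ for some standard basis vector $\mathbf{e}_j \in \C^m$ and $\sigma_i \in i\R$, and therefore lies in $\cV_{\scaleto{{R}}{3.25pt}}$. The reverse inclusion is where \Cref{assumpt:FinZero} enters, and I would establish it by contradiction.

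Suppose $\range{\bV} \subsetneq \cV_{\scaleto{{R}}{3.25pt}}$, so that $\cV_{\scaleto{{R}}{3.25pt}}^{\perp} \subsetneq \range{\bV}^{\perp}$, and pick a nonzero $\bw \in \range{\bV}^{\perp} \setminus \cV_{\scaleto{{R}}{3.25pt}}^{\perp}$. By construction, $\bw^T \cK(\sigma_i)^{-1}\cB(\sigma_i) = 0$ as a row vector for every $i = 1,\ldots,N$. On the other hand, since $\cV_{\scaleto{{R}}{3.25pt}}$ is the smallest subspace of $\C^n$ containing the columns of $\cK(s)^{-1}\cB(s)$ for all $s \in i\R$, its orthogonal complement admits the characterization
\[ \cV_{\scaleto{{R}}{3.25pt}}^{\perp} = \{\bw \in \C^n : \bw^T \cK(s)^{-1}\cB(s) = 0 \text{ for all } s \in i\R\}, \]
which forces the existence of some $s^{*} \in i\R$ at which $\bw^T \cK(s^{*})^{-1}\cB(s^{*})$ is a nonzero row in $\C^{1 \times m}$. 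Selecting an index $j$ for which its $j$th entry is nonzero and setting $\bz = \mathbf{e}_j$, the scalar meromorphic function $\bF_{\bw,\bz}(s) = \bw^T \cK(s)^{-1}\cB(s)\bz$ is not identically zero, yet vanishes at each of the $N$ distinct imaginary-axis points $\sigma_1,\ldots,\sigma_N$. With $N \geq n_z$, this contradicts the bound in \Cref{assumpt:FinZero} on the number of zeros of $\bF_{\bw,\bz}$ on $i\R$, yielding $\range{\bV} = \cV_{\scaleto{{R}}{3.25pt}}$.

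The step requiring the most care is the reduction of the vector-valued condition $\bw^T \cK(s)^{-1}\cB(s) = 0$ to the scalar hypothesis of \Cref{assumpt:FinZero}; this is handled by extracting a single coordinate via $\bz = \mathbf{e}_j$, but it rests on the orthogonal characterization of $\cV_{\scaleto{{R}}{3.25pt}}^{\perp}$ derived above and on the fact that the $\sigma_i$ are distinct points of $i\R$ at which $\cK(s)$ is invertible. Beyond that, the argument is a structured analogue of the zero-counting proof for first-order systems in \cite[Lemma 3.1]{morAndA90}, where the polynomial-degree bound of the rational case is replaced by the abstract assumption of finitely many imaginary-axis zeros.
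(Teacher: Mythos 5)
Your proof is correct and takes essentially the same route as the paper's: assume $\range{\bV} \subsetneq \cV_{\scaleto{{R}}{3.25pt}}$, produce a vector $\bw$ annihilating every column of $\bV$, and count the zeros of the scalar function $\bF_{\bw,\bz}$ against \Cref{assumpt:FinZero} at the $N \geq n_z$ interpolation points. If anything, you are slightly more careful than the paper, which picks $\bw \in \cV_{\scaleto{{R}}{3.25pt}}$ with $\bw^T\bV = 0$ without explicitly verifying that the resulting scalar function is not identically zero --- the point you handle by taking $\bw \in \range{\bV}^{\perp}\setminus\cV_{\scaleto{{R}}{3.25pt}}^{\perp}$ and extracting a nonvanishing coordinate.
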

\begin{proof}
Suppose  that $\cK(s)^{-1}\cB(s)$ has at most $n_z$ zeros on $i\R$. We know that $ \range{\bV} \subseteq \cV_{\scaleto{{R}}{3.25pt}}$. Suppose by contradiction that  $\range{\bV} \neq \cV_{\scaleto{{R}}{3.25pt}}$. Hence, there exists a vector $\bw \in \cV_{\scaleto{{R}}{3.25pt}}$ such that $\bw^T\bV = 0$. Hence,
$\bw^T \cK(\sigma_j)^{-1}\cB(\sigma_j)e_i =0$, for $j =1, \dots, N$, $i =1, \dots, m$. Hence, the function $\bw^T \cK(s)^{-1}\cB(s)e_i =0$ has at least $N$ zeros on the imaginary axis, with  $N\geq n_z$, which contradicts the assumption of $n_z$ zeros.
\end{proof}
\Cref{theo:FinNunZero} shows that, under the hypothesis of a limited number of zeros on the imaginary axis, the reachability subspace is encoded by the matrix $\bV$ constructed with $N$ interpolation points, where $N \geq n_z$. 

\begin{remark} In order to have $n_z$ from  \Cref{theo:FinNunZero}, \Cref{assumpt:FinZero} needs to be fulfilled. However,  this assumption is difficult to verify in practice. Nevertheless, numerical experiments show that it is enough to choose $N = n$, so that $\range{\bV} = \cV_{\scaleto{{R}}{3.25pt}}$.  
Moreover, in a large-scale setting $(n \in \cO(10^{5}{-}10^{6}))$, we observe that as we increase the number of interpolation points $(N)$, $\range{\bV} \to \cV_{\scaleto{{R}}{3.25pt}}$.
\end{remark}
The same analysis presented here holds for the observability space. 
\subsection{Note on the Parametric Case} 
So far, we have presented the definitions of $\C^n$ reachability and $\C^n$ observability for structured non-parametric systems. For the parametric case  \eqref{eq:gen_trans}, we can consider the following natural extensions of \Cref{def:Cncontr,defLCnObserv}. The reachable subspace $\cV_{\scaleto{{R}}{3.25pt}} = \range{\bV_{\scaleto{{R}}{3.25pt}}}$ associated to the pair of functions  $(\cK(s,\bp), \cB(s,\bp))$, with a full column rank matrix $\bV_{\scaleto{{R}}{3.25pt}}\in \C^{n \times r_{\scaleto{{R}}{3.25pt}}}$, $r_{\scaleto{{R}}{3.25pt}} \leq n$, is the smallest subspace of $\C^n$ which contains $\cK(s,\bp)^{-1}\cB(s,\bp)$ for every $s \in i\R$ and $\bp \in \Omega^d$. In other words,
\[\cK(s, \bp)^{-1}\cB(s,\bp) = \bV_{\scaleto{{R}}{3.25pt}} \bz_c(s,\bp), \]
where $\bz_c(s, \bp)\in \C^{p \times r_{\scaleto{{R}}{3.25pt}}}$.  Similarly, the $\C^n$--observable subspace $\cW_{\scaleto{{O}}{3.25pt}} = \range{\bW_{\scaleto{{O}}{3.25pt}}}$ associated to $(\cC(s, \bp), \cK(s, \bp))$, with a full rank matrix $\bW_{\scaleto{{O}}{3.25pt}}\in \C^{n \times r_{\scaleto{{O}}{3.25pt}}}$, $r_{\scaleto{{O}}{3.25pt}}\leq n$, is the smallest subspace of $\C^n$ which contains $\cK(s,\bp)^{-T}\cC(s,\bp)^T$ for every $s \in i\R$ and $\bp \in \Omega^d$. In other words,  
\[\cC(s,\bp)\cK(s, \bp)^{-1} =y_{\scaleto{{O}}{3.25pt}}(s, \bp) \bW_{}^T \bW_{\scaleto{{O}}{3.25pt}} , \]
where $y_{\scaleto{{O}}{3.25pt}}(s, \bp)\in \C^{p \times r_{\scaleto{{O}}{3.25pt}} }$ is a matrix of functions. 
Furthermore, the result of \Cref{prop:ControlObservRemov}  can be extended to the parametric case, showing that states that are unreachable and unobservable can be removed.   We do not have an analogue for \Cref{theo:FinNunZero}. However, we know that if we take enough interpolations points $(\sigma_i,\bp_i)$, the matrices
\begin{subequations}\label{eq:VWpar}
\begin{align}
\bV &= \begin{bmatrix}
\cK(\sigma_1,\bp_1)^{-1}\cB(\sigma_1,\bp_1) & \dots & \cK(\sigma_{N}, \bp_{N})^{-1}\cB(\sigma_{N},\bp_{N)}
\end{bmatrix}, \\
\bW  &= \begin{bmatrix}
\cK(\sigma_1,\bp_1)^{-T}\cC(\sigma_1,\bp_1)^T &  \dots & \cK(\sigma_N,\bp_{N})^{-T}\cC(\sigma_{N}, \bp_{N})^T 
\end{bmatrix},  
\end{align}
\end{subequations}
encode the reachability and observability spaces, i.e., \[\range{\bV} = \cV_{\scaleto{{R}}{3.25pt}} \quad \text{and} \quad \range{\bW} = \cW_{\scaleto{{O}}{3.25pt}}.\] 

From now on, we assume that we have  enough interpolation points such that $\range{\bV} = \cV_{\scaleto{{R}}{3.25pt}}$ and $\range{\bW} = \cW_{\scaleto{{O}}{3.25pt}}$.

\subsection{Simultaneous Reduction}
We have seen that if the system \eqref{eq:gen_trans} is not reachable or not observable, there exists a lower-order realization whose transfer function remains the same as the original one. To obtain such a lower-order realization, one needs to truncate the states that are unreachable and observable, as shown in \Cref{prop:ControlObservRemov}. However, it remains an open question of how this information can be used to obtain reduced-order systems. In what follows, we propose a method enabling to identify simultaneously the states that are unreachable and unobservable. For this, we assume that  
\begin{equation}\label{eq:RankCondition}
\rank{\begin{bmatrix}
	\bW^T\bA_1\bV & \dots & \bW^T\bA_l\bV
	\end{bmatrix}} = \rank{\begin{bmatrix}
	\bW^T\bA_1\bV \\ \vdots \\ \bW^T\bA_l\bV
	\end{bmatrix}} = r,
\end{equation}
where $\bV$ and $\bW$ are the matrices defined in \cref{eq:VWnonpar} encoding, respectively, the reachability and the observability subspaces, i.e.,    $\range{\bV} = \cV_{\scaleto{{R}}{3.25pt}}$ and $\range{\bW} = \cW_{\scaleto{{O}}{3.25pt}}$ and $\bA_i, i \in \{1,\ldots,l\}$, are defined in \eqref{eq:sys_matrices}. Then, we consider the compact SVDs
\begin{equation}\label{eq:shortSVD}
\begin{bmatrix}
\bW^T\bA_1\bV & \dots & \bW^T\bA_l\bV
\end{bmatrix} = \bW_1 \Sigma_l \tilde{\bV}^T \quad \text{and} \quad  \begin{bmatrix}
\bW^T\bA_1\bV \\ \vdots \\ \bW^T\bA_l\bV
\end{bmatrix} = \tilde{\bW}\Sigma_r \bV_1^T.
\end{equation}
Let $\bW_p := \bW\bW_1$ and  $\bV_p := \bV\bV_1$ be two projection matrices and let us consider the lower-order realization $\chC_p(s, \bp) \chK_p(s, \bp)^{-1}\chB_p(s, \bp)$ constructed by Petrov-Galerkin projection as follows:
\begin{equation}\label{eq:MinimalReal} \chB_p(s,\bp) = \bW_p^T \cB(s, \bp), \quad \chC_p(s, \bp) = \cC(s, \bp)\bV_p, \quad \chK_p(s,\bp) = \bW_p^T\cK(s, \bp)\bV_p.    
\end{equation}
Then, the following result holds.
\begin{theorem}\label{theo:MinReal} The lower-order system $\chC_p(s,\bp) \chK_p(s,\bp)^{-1}\chB_p(s, \bp)$ of order $r$, obtained as given in \cref{eq:MinimalReal}, realizes the original transfer function, i.e.,
	\[\chC_p(s, \bp) \chK_p(s, \bp)^{-1}\chB_p(s, \bp) = \cC(s, \bp) \cK(s,\bp)^{-1}\cB(s,\bp) \]
for every $s\in i\R$ and $\bp \in \Omega^d$.
\end{theorem}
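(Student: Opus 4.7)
The plan is to combine the characterizations of reachability and observability implied by the hypothesis $\range{\bV} = \cV_{\scaleto{{R}}{3.25pt}}$, $\range{\bW} = \cW_{\scaleto{{O}}{3.25pt}}$ with a key structural identity extracted from the two SVDs in~\eqref{eq:shortSVD}.

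First, by the parametric extensions of \Cref{def:Cncontr,defLCnObserv}, there exist meromorphic matrix functions $y_R(s,\bp)$ and $y_O(s,\bp)$ such that
\[\cK(s,\bp)^{-1}\cB(s,\bp) = \bV\, y_R(s,\bp), \qquad \cC(s,\bp)\cK(s,\bp)^{-1} = y_O(s,\bp)^T\bW^T.\]
Multiplying through by $\cK(s,\bp)$ produces the equivalent forms $\cB(s,\bp) = \cK(s,\bp)\bV\, y_R(s,\bp)$ and $\cC(s,\bp) = y_O(s,\bp)^T\bW^T\cK(s,\bp)$, which will be substituted into the reduced transfer function.

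Second, I would establish the factorization
\[\bW^T\cK(s,\bp)\bV = \bW_1\, \bW_p^T\cK(s,\bp)\bV_p\, \bV_1^T.\]
The left-hand SVD in~\eqref{eq:shortSVD} implies that each block $\bW^T\bA_i\bV$ has its column space inside $\range{\bW_1}$, while the right-hand SVD implies that each such block has its row space inside $\range{\bV_1}$. Since $\bW_1$ and $\bV_1$ come from compact SVDs they have orthonormal columns, so $\bW_1\bW_1^T$ and $\bV_1\bV_1^T$ are the associated orthogonal projectors and consequently $\bW_1\bW_1^T (\bW^T\bA_i\bV) \bV_1\bV_1^T = \bW^T\bA_i\bV$ for every $i \in \{1,\ldots,l\}$. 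Expanding $\cK(s,\bp) = \sum_{i=1}^l \kappa_i(s,\bp)\bA_i$ and recalling that $\bW_p = \bW\bW_1$ and $\bV_p = \bV\bV_1$ then yields the claimed factorization.

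Third, assuming $\bW_p^T\cK(s,\bp)\bV_p$ is invertible so that $\chK_p(s,\bp)^{-1}$ is well-defined, I would combine the two ingredients. Substituting $\cC = y_O^T\bW^T\cK$ into $\cC(s,\bp)\bV_p = \cC(s,\bp)\bV\bV_1$ and applying the factorization gives $\cC(s,\bp)\bV_p = y_O(s,\bp)^T\bW_1\,\bW_p^T\cK(s,\bp)\bV_p$, whence $\cC(s,\bp)\bV_p(\bW_p^T\cK(s,\bp)\bV_p)^{-1} = y_O(s,\bp)^T\bW_1$. Substituting $\cB = \cK\bV y_R$ into $\bW^T\cB$ and using the factorization gives $\bW^T\cB(s,\bp) = \bW_1\bW_p^T\cK(s,\bp)\bV_p\bV_1^T y_R(s,\bp)$, from which $\bW_1\bW_1^T\bW^T\cB(s,\bp) = \bW^T\cB(s,\bp)$. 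Combining these two identities yields
\[\chC_p(s,\bp)\chK_p(s,\bp)^{-1}\chB_p(s,\bp) = y_O(s,\bp)^T\bW_1\bW_1^T\bW^T\cB(s,\bp) = y_O(s,\bp)^T\bW^T\cB(s,\bp) = \cC(s,\bp)\cK(s,\bp)^{-1}\cB(s,\bp),\]
as required.

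The main obstacle is the structural factorization step: only by using both SVDs simultaneously — the left one to compress columns via $\bW_1$ and the right one to compress rows via $\bV_1$ — can one reduce the Petrov-Galerkin matrix $\bW^T\cK(s,\bp)\bV$ to its compressed counterpart $\bW_p^T\cK(s,\bp)\bV_p$ without losing information, and this is precisely what enables $\bV$, $\bW$ to be replaced by the slim $\bV_p$, $\bW_p$ in the projected realization. A secondary concern is the standing invertibility of $\bW_p^T\cK(s,\bp)\bV_p$, which is implicit in the statement (since otherwise $\chK_p^{-1}$ makes no sense) and should either be assumed or shown to hold outside a set of measure zero in $(s,\bp)$.
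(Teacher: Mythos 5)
Your proof is correct. It rests on the same two pillars as the paper's argument --- the containments $\cK(s,\bp)^{-1}\cB(s,\bp)\in\range{\bV}$ and $\cK(s,\bp)^{-T}\cC(s,\bp)^{T}\in\range{\bW}$, together with the rank information carried by the two compact SVDs --- but you package the SVD information differently. The paper extends $\bV_p,\bW_p$ to bases $\begin{bmatrix}\bV_p & \bV_a\end{bmatrix}$ and $\begin{bmatrix}\bW_p & \bW_a\end{bmatrix}$ of the reachable and observable subspaces, shows that the projected matrix becomes block diagonal via $\bW_p^T\cK(s,\bp)\bV_a=\bW_a^T\cK(s,\bp)\bV_p=\bW_a^T\cK(s,\bp)\bV_a=0$, and then telescopes $\cC\cK^{-1}\cdot\cK\cdot\cK^{-1}\cB$; you instead prove the single factorization $\bW^T\cK(s,\bp)\bV=\bW_1\,\chK_p(s,\bp)\,\bV_1^T$ from the orthogonal projectors $\bW_1\bW_1^T$ and $\bV_1\bV_1^T$ and compute directly. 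The two are equivalent in content, but your version is arguably tidier: it never needs the auxiliary complements $\bV_a,\bW_a$ (whose cross terms the paper can only kill by tacitly combining $\bV_a\perp\bV_p$, $\bW_a\perp\bW_p$ with orthonormality of the columns of $\bV$ and $\bW$), and it makes explicit where each of the two SVDs enters. Your closing caveat about the invertibility of $\chK_p(s,\bp)$ is also well placed; the paper's proof needs the same assumption and leaves it implicit.
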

\begin{proof}
	Recall that $\bV_p = \bV\bV_1$ and $\bW_p = \bW\bW_1$. Hence, by construction, $\range{\bV_p} \subseteq \range{\bV} = \cV_{\scaleto{{R}}{3.25pt}}$ and $\range{\bW_p}  \subseteq \range{\bW} =  \cW_{\scaleto{{O}}{3.25pt}}$.  Hence, there exist $\bV_a$ and $\bW_a$, $\bV_a \perp \bV_p$ and $\bW_a \perp \bW_p$  such that $\range{\begin{bmatrix}\bV_p & \bV_a\end{bmatrix}} = \cV_{\scaleto{{R}}{3.25pt}}$ and $\range{\begin{bmatrix}\bW_p & \bW_a \end{bmatrix}} = \cW_{\scaleto{{O}}{3.25pt}}$. 
	As  a consequence, from the compact SVDs in \eqref{eq:shortSVD},
	\[ \bW^T\bA_k\bV_a = 0 ,\quad \bW_a^T\bA_k\bV = 0, \quad \text{and} \quad \bW_a^T\bA_k\bV_a =0, ~\text{for $k = 1, \dots ,l$},   \]
	and hence,
	\[\bW_p^T\cK(s, \bp)\bV_a = 0, \quad \bW_a^T\cK(s, \bp)\bV_p = 0, \quad \text{and}~\, \quad \bW_a^T\cK(s, \bp)\bV_a = 0. \]
	Moreover, notice that
	\begin{align*}
	&\cK(s,\bp)^{-1}\cB(s, \bp) = \begin{bmatrix}
	\bV_p  & \bV_a
	\end{bmatrix}\begin{bmatrix}
	\chK_p(s, \bp)^{-1}\chB_p(s, \bp) \\
	\star
	\end{bmatrix}\quad \text{and}\\ 
	&\cC(s, \bp)\cK(s, \bp)^{-1} = \begin{bmatrix}
	\chC_p(s, \bp)\chK_p(s, \bp)^{-1} & \star
	\end{bmatrix}\begin{bmatrix}
	\bW_p^T \\ 
	\bW_a^T
	\end{bmatrix}.  
		\end{align*}
	Finally, we write
	\begin{align*}
	\cC(s, \bp)&\cK(s, \bp)^{-1}\cB(s, \bp) =  \cC(s, \bp)\cK(s)^{-1}\cK(s,\bp)\cK(s,\bp)^{-1}\cB(s,\bp) 
	\\
	&= \begin{bmatrix}
	\chC(s, \bp)\chK(s, \bp)^{-1} & \star
	\end{bmatrix}\begin{bmatrix}
	\bW_p^T \\ 
	\bW_a^T
	\end{bmatrix}\cK(s, \bp)\begin{bmatrix}
	\bV_p  & \bV_a
	\end{bmatrix}\begin{bmatrix}
	\chK(s, \bp)^{-1}\chB(s, \bp) \\
	\star
	\end{bmatrix} 
	\\ 
	&= \begin{bmatrix}
	\chC(s, \bp)\chK(s, \bp)^{-1} & \star
	\end{bmatrix}\begin{bmatrix}
	\bW_p^T\cK(s, \bp)\bV_p & 0 \\
	0 & 0
	\end{bmatrix}\begin{bmatrix}
	\chK(s, \bp)^{-1}\chB(s, \bp) \\
	\star
	\end{bmatrix} 
	\\
	&= 
	\chC(s, \bp)\chK(s, \bp)^{-1} \chK(s, \bp) 
	\chK(s, \bp)^{-1}\chB(s, \bp)  =  \chC(s, \bp)\chK(s, \bp)^{-1} \chB(s, \bp), 
	\end{align*}
which concludes the proof.
\end{proof}
\Cref{theo:MinReal} shows that if we construct the lower-order system as shown in~\cref{eq:MinimalReal}, it also realizes the original system. As a consequence, the rank condition \cref{eq:RankCondition} gives us the order $r$ of the lower-order realization constructed by Petrov Galerkin projection which realizes the original system. Hence, the matrices 
\begin{equation}\label{eq:ComplexMatrices}
\begin{bmatrix}
	\bW^T\bA_1\bV & \dots & \bW^T\bA_l\bV
\end{bmatrix}, \quad \text{and}~\, \begin{bmatrix}
	\bW^T\bA_1\bV \\ \vdots \\ \bW^T\bA_l\bV
\end{bmatrix}
\end{equation}
encode the complexity of the original system. Moreover, by means of the compact SVDs of these matrices, we are able to find the projection matrices $\bV_p = \bV\bV_1$ and $\bW_p = \bW\bW_1$, leading to a lower-order system having the same transfer function as the original one.  
Hence, the states that are removed from the dynamics of the original systems can be seen as unreachable or unobservable ones.  On the other hand,   the singular values in $\Sigma_r$ and $\Sigma_l$ give some important information about the simultaneous degree of reachability and observability of the states. Indeed, states that are related to small singular values can be interpreted to have a weak simultaneous degree of reachability  and observability, while the states related to large singular values are strongly simultaneously reachable and observable. Therefore, removing also the subspaces associated  with small singular values leads to reduced-order systems. 

\section{Model-Order Reduction Algorithm}
Based on the arguments given in the previous section, we propose \Cref{algo:Inter_structured_para} enabling us to construct reduced-order systems for structured systems \eqref{eq:gen_trans_nonpar}.  The procedure consists in selecting interpolation points $(s_i, \bp_i)$ and constructs the matrices $\bV$ and $\bW$ as in \eqref{eq:ProjMatrices} (steps 2 and 3). If we have enough interpolation points, the subspaces $\range{\bV}$ and $\range{\bW}$  mimic the reachable subspace $\cV_{\scaleto{{R}}{3.25pt}}$ and the observable  $\cW_{\scaleto{{O}}{3.25pt}}$, respectively. Then, in step 4, we compute the SVDs of the matrices in \eqref{eq:SVDalgo}. As previously discussed, the numerical rank of those matrices  gives the order of a good reduced-order system. Hence, in step 5, the projection matrices $\bV_p$ and $\bW_p$ are constructed according to \Cref{theo:MinReal}. Finally, in step 6, the reduced-order system  is determined by Petrov-Galerkin projection.

	\begin{remark}\label{rem:tangentialinterpolation}
		So far in the paper, we have refrained from discussing the idea of tangential interpolation. In the case of multi-input multi-output (MIMO) systems, one can employ the idea of tangential interpolation which has been proven to be very useful in MIMO case. For this, along with considering interpolation points $\{\sigma_i,\bp^r_i\}$ and $\{\mu_j,\bp^l_j\}$, we also consider tangential directions $\bb_i$ and $\bc_j$ of appropriate sizes. For details, we refer to e.g., \cite{morBauBBetal11}. Hence, with the tangential directions, the analogues of matrices $\bV$ and $\bW$ in \eqref{eq:VWpar} are 
		\begin{align*}
		\bV &= \begin{bmatrix}
		\cK(\sigma_1,\bp_1)^{-1}\cB(\sigma_1,\bp_1)\bb_1 & \dots & \cK(\sigma_{N}, \bp_{N})^{-1}\cB(\sigma_{N},\bp_{N)}\bb_{N}
		\end{bmatrix}, \\
		\bW  &= \begin{bmatrix}
		\cK(\sigma_1,\bp_1)^{-T}\cC(\sigma_1,\bp_1)^T\bc_1^T &  \dots & \cK(\sigma_N,\bp_{N})^{-T}\cC(\sigma_{N}, \bp_{N})^T\bc_{N}^T 
		\end{bmatrix}.
		\end{align*} 
\end{remark}
Hence, these matrices can be used in step 3 of  \Cref{algo:Inter_structured_para}. 
\begin{remark}
 One of the additional advantages of the proposed algorithm is that it inherently allows us to construct frequency-limited reachable and observable subspaces by choosing the interpolation points in a given frequency range. Hence, it yields a reduced-order system that is good in the considered frequency range. 
\end{remark}
\begin{remark}
 The proposed framework is also suitable for non-dynamical linear parametric systems, i.e., systems of the form
\begin{align*}
\bA(\bp)\bx(\bp) &= \bB(\bp), \\
\by(\bp) &= \bC(\bp)\bx(\bp).
\end{align*}
In this case, the solution $\by(\bp)$ can be obtained as
\[
\by(\bp) = \bC(\bp) \bA(\bp)^{-1} \bB(\bp),
\]
and the reachability and observability subspaces, namely $\cV_{\scaleto{{R}}{3.25pt}}$ and $\cW_{\scaleto{{O}}{3.25pt}}$, are the smallest subspaces such that $\bA(\bp)^{-1} \bB(\bp)\in\cV_{\scaleto{{R}}{3.25pt}}$  and $ \bA(\bp)^{-T}\bC(\bp)^T\in \cW_{\scaleto{{O}}{3.25pt}}$ for all $\bp \in \Omega^d$. In this case, we determine dominant subsystems with respect to parameters using the \Cref{algo:Inter_structured_para}.
\end{remark}
\begin{remark}
 In several applications, it is highly desirable to determine reduced-order systems via one-sided projection in order to potentially preserve some important properties of the systems such as stability or passivity. In this case, in step 3 of \Cref{algo:Inter_structured_para}, we set $\bW = \bV$, and in step 5 of \Cref{algo:Inter_structured_para}, we again set $\bW_p = \bV_p$. 
\end{remark}

\begin{algorithm}[tb]
	\caption{Construction of ROMs via \textbf{D}ominant \textbf{R}eachable and \textbf{O}bservable subspace-based \textbf{P}rojection (\textbf{DROP}).}\label{algo:Inter_structured_para}
	\begin{algorithmic}[1]
				\State \textbf{Input:} The transfer function as in \eqref{eq:gen_parametric_TF}, and order of reduced system $r$.
				\State Choose left and right interpolation points for frequency and parameters. 
				\State Compute $\bV$ and $\bW$ using the interpolation points as in \eqref{eq:ProjMatrices}.
				\State Determine SVDs  \begin{equation}\label{eq:SVDalgo}
				\begin{bmatrix}
				\bW^T\bA_1\bV & \dots & \bW^T\bA_l\bV
				\end{bmatrix} = \bW_1 \Sigma_l \tilde{\bV}^T \quad \text{and} \quad  \begin{bmatrix}
				\bW^T\bA_1\bV \\ \vdots \\ \bW^T\bA_l\bV
				\end{bmatrix} = \tilde{\bW}\Sigma_r \bV_1^T 
				\end{equation} \label{step:SVD}
				\State Compute projection matrices: 
				 $\bV_p = \bV\bV_{1}(:,1:r)$ and $\bW_p = \bW\bW_{1}(:,1:r) $.
				\State Compute reduced matrices:
				\Statex \hfill $\hat{\cK}(s,\bp) = \bW_p^T\cK_s(s)\bV_p$, $\hat{\cB}(s,\bp) = \bW_p^T\cB(s,\bp)$ and $\hat{\cC}(s,\bp) = \cC(s,\bp)\bV_p$.
				\State \textbf{Output:} The reduced-order matrices: $\hat{\cK}(s,\bp),\hat{\cB}(s,\bp),$ and $\hat{\cC}(s,\bp)$.
	\end{algorithmic}
\end{algorithm}

\section{Numerical Results}\label{sec:numerical}
In this section, we illustrate the efficiency of the proposed methods via several numerical examples, arising in various applications. We also compare for the non-parametric case with the existing 
methods   proposed in \cite{breiten2016structure, morFenB10}, and for the parametric case with the adaptive interpolation method proposed in \cite{morFenAB17}. We have performed all the simulations on  a board with 4 \intel ~\xeon~E7-8837 CPUs with a 2.67-GHz clock speed using \matlab ~8.0.0.783 (R2016b). Furthermore, we generate random numbers, whenever necessary, using \texttt{rng(0,`twister')}. In the case of a MIMO system, we determine the projection matrices by employing the idea of tangential interpolation, and we choose the tangential directions randomly. 
\subsection{A Demo Example}
At first, we discuss an artificial example to illustrate the proposed methods. Let us consider a system of order $n=3$ as follows:
\begin{equation}\label{eq:demo_exa}
\begin{aligned}
 \bbm 1 & 0 & 0 \\0& 1 & 0 \\ 0&0&1\ebm \dot{\bx}(t)& = \bbm -2 & 0 &0 \\ 0 & -1& 0 \\ 0 & 0 &-2\ebm \bx(t) + \bp\bbm 0 & 1&0 \\ -1 & 0& 0 \\ 1 & 0 & 0 \ebm \bx(t) + \bbm 1\\0\\1 \ebm \bu(t),\\
 \by(t) &= \bbm 1&1&0\ebm \bx(t).
 \end{aligned}
 \end{equation}
The example is constructed in such a way that the parameter $\bp$ changes the imaginary parts of two eigenvalues of the system. Hence, with a change of the parameter, we expect a change in the peak of the transfer function. 

Next, we aim at constructing a reduced realization of the system~\cref{eq:demo_exa} by employing \cref{algo:Inter_structured_para}. 
For this, we take $10$ points for the frequency $s \in \bbm 10^{-4}, 10\ebm$ and for the parameter $\bp \in \bbm -10,10\ebm$. We take the frequency points in the given range in the logarithm scale, whereas the parameters are chosen randomly in the considered parameter range. In \Cref{fig:demo_SVD}, we plot the decay of the singular values as shown in step $4$ of \Cref{algo:Inter_structured_para}. It can be observed that after the first two singular values, the singular values are at the level of machine precision. This means that a reachable and observable system, representing the input/output behavior of the system \eqref{eq:demo_exa} has the order exactly $r=2$. We compare the transfer functions of the original and reduced-order systems  for $20$ linearly spaced parameter values in $\bbm -10,10\ebm$, which are plotted in \Cref{fig:demo_exam_TF}. The figure shows that the error between the original and reduced-order system is of the  level of machine  precision.
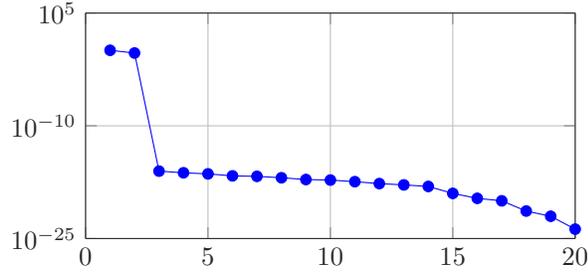
\begin{figure}[!tb]
\centering
\setlength\fheight{3.0cm}
\setlength\fwidth{.5\textwidth}
    \tikzsetnextfilename{Figures/Random_Singular}%
%
%
\begin{tikzpicture}

\begin{axis}[%
width=\fwidth,
height=\fheight,
scale only axis,
separate axis lines,
every outer x axis line/.append style={white!15!black},
every x tick label/.append style={font=\color{white!15!black}},
xmin=0,
xmax=20,
xmajorgrids,
every outer y axis line/.append style={white!15!black},
every y tick label/.append style={font=\color{white!15!black}},
ymode=log,
ymin=1e-25,
ymax=100000,
yminorticks=true,
ymajorgrids,
yminorgrids,
title style={font=\bfseries},
]
\addplot [color=blue,mark=*,mark options={solid},forget plot, mark repeat = 1]
  table[row sep=crcr]{1	1.10456104775585\\
2	0.479948551427472\\
3	9.28118132740165e-17\\
4	5.61935661762386e-17\\
5	4.02271515436719e-17\\
6	2.18211940715568e-17\\
7	1.8798081019259e-17\\
8	1.21917865350396e-17\\
9	7.08588523047599e-18\\
10	6.01774965702497e-18\\
11	3.66906486263913e-18\\
12	2.04476691577805e-18\\
13	1.36614147168194e-18\\
14	8.49695472387931e-19\\
15	1.01758732767455e-19\\
16	2.22889760833644e-20\\
17	1.04191802709932e-20\\
18	4.54976210130993e-22\\
19	9.23444176315289e-23\\
20	1.77427965494288e-24\\
};
\end{axis}
\end{tikzpicture}

\caption{Demo example: Decay of the singular values.}
\label{fig:demo_SVD}
\end{figure}

The order 2 of a minimal realization can also be easily verified by analyzing the system \eqref{eq:demo_exa}. Notice that $x_3(t)$ does not influence the dynamics of $x_1(t)$  and $x_2(t)$, where $x_i(t)$ denotes the $i$th component of the vector $x(t)$, and $x_3(t)$ is not being observed since the output matrix $\bC = \bbm 1,1,0\ebm$. Hence, $x_3(t)$ can be eliminated as far as the input-output dynamics are concerned. Therefore, the system \eqref{eq:demo_exa} can exactly be reduced to order $2$. 

\begin{figure}[!tb]
    \centering
    \setlength\fheight{2.5cm}
    \setlength\fwidth{.7\textwidth}
    \tikzsetnextfilename{Figures/Random_TF_AbsError}%
    \input{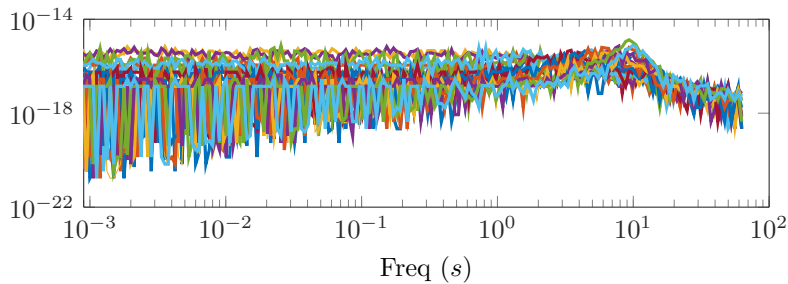}%

    \caption{Demo example: The figure shows the Bode plot of the error between the original and reduced-order systems for different parameter values.}
    \label{fig:demo_exam_TF}
\end{figure}

\subsection{Time-Delay System}
Next, we consider the time-delay model \cite{morBeaG09} of order $n$ as  follows:
\begin{equation}
 \begin{aligned}
  \bE\dot{\bx}(t) & =  \bA \bx(t) + \bA_\tau \bx(t-\tau) + \bB \bu(t),\\
 \by(t) & = \bC \bx(t),
 \end{aligned}
\end{equation}
where $\bE = \mu \bI_n + \bT $, $\bA = \tfrac{1}{\tau}\left(\tfrac{1}{\zeta} + 1\right)\left(\bT - \mu \bI_n\right)$, and $\bA_\tau = \tfrac{1}{\tau}\left(\tfrac{1}{\zeta} - 1\right)\left(\bT - \mu \bI_n\right)$ in which 
the matrix $\bT$ is such that it is ones on the sub- and super-diagonal along with the $(1,1)$ and $(n,n)$ elements. The input matrix $\bB$ is zero everywhere except for the first and second entries, i.e., $\bB(1) = \bB(2) = 1$, and the output matrix is $\bC = \bB^T$. Furthermore, we choose $\mu = 5$, $\zeta = 0.01$, $\tau = 1$, and the order $n = 500$.

Next, we aim at constructing reduced-order systems using  balanced truncation as proposed in \cite{breiten2016structure} and using \Cref{algo:Inter_structured_para}. In order to apply \Cref{algo:Inter_structured_para}, we take $1000$ logarithmically distributed frequency points in the range $\bbm 10^{-2},10^4\ebm$. Furthermore, to apply balanced truncation, we need to determine the system Gramians, which are given in integral forms. 
To compute approximations of the Gramians, we make of use of \texttt{quadv} command in \matlab~with $\texttt{tol}=10^{-10}$ to integrate in the given frequency range. 

First, we plot the singular values, obtained from balanced truncation and \Cref{algo:Inter_structured_para} in \Cref{fig:delay_SVD}, which indicates a faster decay of singular values obtained from \Cref{algo:Inter_structured_para}.   We now determine reduced-order systems of order $r = 12$ using both methods. We compare the Bode plots of the original and reduced-order systems in \Cref{fig:delay_exam_TF}. The figure shows that both methods capture the dynamics very well; however, our method clearly yields a better reduced-order system  at least by two orders of magnitudes. 

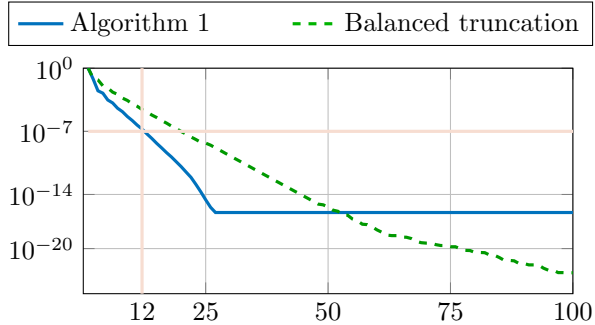
\begin{figure}[!tb]
\centering
\definecolor{mycolor1}{rgb}{0.00000,0.44700,0.74100}%
\definecolor{mycolor2}{rgb}{0.85000,0.32500,0.09800}%
\definecolor{mycolor3}{rgb}{0.92900,0.69400,0.12500}%
\begin{tikzpicture}
    \begin{customlegend}[legend columns=-1, legend style={/tikz/every even column/.append style={column sep=1cm}} , legend entries={Algorithm 1, Balanced truncation }, ]
    \addlegendimage{mycolor1, line width = 1.2pt}
    \addlegendimage{black!30!green,dashed, line width = 1.2pt}
    \end{customlegend}
\end{tikzpicture}
\setlength\fheight{3.0cm}
\setlength\fwidth{.5\textwidth}
    \tikzsetnextfilename{Figures/Delay_DecaySV}%
%
%
\definecolor{mycolor1}{rgb}{0.00000,0.44700,0.74100}%
\definecolor{mycolor2}{rgb}{0.85000,0.32500,0.09800}%
\definecolor{mycolor3}{rgb}{0.92900,0.69400,0.12500}%
\begin{tikzpicture}

\begin{axis}[%
width=1\fwidth,
height=\fheight,
at={(0\fwidth,0\fheight)},
scale only axis,
xmin=0,
xmax=100,
ymode=log,
ymin=1e-25,
ymax=1,
yminorticks=true,
grid = major,
xtick = {12,25,50,75,100}, 
ytick = {1e0,1e-7,1e-14,1e-20}, 
axis background/.style={fill=white},
legend style={legend cell align=left, align=left, draw=white!15!black}
]
\addplot [color=mycolor1, line width = 1.2pt]
  table[row sep=crcr]{%
1	1\\
2	0.0654198180593519\\
3	0.00310810036044955\\
4	0.00169792098469833\\
5	0.000299409386531134\\
6	0.000140149493319843\\
7	3.64431972783089e-05\\
8	1.50598353284312e-05\\
9	4.38516713691885e-06\\
10	1.58531623557241e-06\\
11	4.93226915945008e-07\\
12	1.8149549906185e-07\\
13	5.19646172130738e-08\\
14	1.62787086923027e-08\\
15	4.63770646241635e-09\\
16	1.46148705170406e-09\\
17	4.37453741122567e-10\\
18	1.45210931410279e-10\\
19	4.18579733894498e-11\\
20	1.20464585890012e-11\\
21	3.0426140394736e-12\\
22	7.87725947490282e-13\\
23	1.31146353325341e-13\\
24	1.96263647741073e-14\\
25	2.55977559489795e-15\\
26	3.77907322841843e-16\\
27	9.99194704669846e-17\\
28	9.99194704669846e-17\\
29	9.99194704669846e-17\\
30	9.99194704669846e-17\\
31	9.99194704669846e-17\\
32	9.99194704669846e-17\\
33	9.99194704669846e-17\\
34	9.99194704669846e-17\\
35	9.99194704669846e-17\\
36	9.99194704669846e-17\\
37	9.99194704669846e-17\\
38	9.99194704669846e-17\\
39	9.99194704669846e-17\\
40	9.99194704669846e-17\\
41	9.99194704669846e-17\\
42	9.99194704669846e-17\\
43	9.99194704669846e-17\\
44	9.99194704669846e-17\\
45	9.99194704669846e-17\\
46	9.99194704669846e-17\\
47	9.99194704669846e-17\\
48	9.99194704669846e-17\\
49	9.99194704669846e-17\\
50	9.99194704669846e-17\\
51	9.99194704669846e-17\\
52	9.99194704669846e-17\\
53	9.99194704669846e-17\\
54	9.99194704669846e-17\\
55	9.99194704669846e-17\\
56	9.99194704669846e-17\\
57	9.99194704669846e-17\\
58	9.99194704669846e-17\\
59	9.99194704669846e-17\\
60	9.99194704669846e-17\\
61	9.99194704669846e-17\\
62	9.99194704669846e-17\\
63	9.99194704669846e-17\\
64	9.99194704669846e-17\\
65	9.99194704669846e-17\\
66	9.99194704669846e-17\\
67	9.99194704669846e-17\\
68	9.99194704669846e-17\\
69	9.99194704669846e-17\\
70	9.99194704669846e-17\\
71	9.99194704669846e-17\\
72	9.99194704669846e-17\\
73	9.99194704669846e-17\\
74	9.99194704669846e-17\\
75	9.99194704669846e-17\\
76	9.99194704669846e-17\\
77	9.99194704669846e-17\\
78	9.99194704669846e-17\\
79	9.99194704669846e-17\\
80	9.99194704669846e-17\\
81	9.99194704669846e-17\\
82	9.99194704669846e-17\\
83	9.99194704669846e-17\\
84	9.99194704669846e-17\\
85	9.99194704669846e-17\\
86	9.99194704669846e-17\\
87	9.99194704669846e-17\\
88	9.99194704669846e-17\\
89	9.99194704669846e-17\\
90	9.99194704669846e-17\\
91	9.99194704669846e-17\\
92	9.99194704669846e-17\\
93	9.99194704669846e-17\\
94	9.99194704669846e-17\\
95	9.99194704669846e-17\\
96	9.99194704669846e-17\\
97	9.99194704669846e-17\\
98	9.99194704669846e-17\\
99	9.99194704669846e-17\\
100	9.99194704669846e-17\\
};

\addplot [color=mycolor2]
  table[row sep=crcr]{%
12	1e-20\\
12	1\\
};

\addplot [color=green!60!black, line width=1.2pt, dashed]
  table[row sep=crcr]{%
1	1\\
2	0.0825605822628279\\
3	0.0502859070459224\\
4	0.0110450514381799\\
5	0.00651907362770264\\
6	0.00219982964095309\\
7	0.00125877105004681\\
8	0.000484860199431572\\
9	0.000267560764408225\\
10	0.000112168961722707\\
11	6.06260896851046e-05\\
12	2.65849686510803e-05\\
13	1.41549995162061e-05\\
14	6.40656171212832e-06\\
15	3.37990186561874e-06\\
16	1.561891651034e-06\\
17	8.1864635463507e-07\\
18	3.84242235254525e-07\\
19	2.00499458503916e-07\\
20	9.51997088805554e-08\\
21	4.95262779164457e-08\\
22	2.37375530144125e-08\\
23	1.23879519696425e-08\\
24	6.11951684916243e-09\\
25	4.96094867531736e-09\\
26	2.94033786545689e-09\\
27	1.47167671605053e-09\\
28	7.50499212007337e-10\\
29	3.71778432606846e-10\\
30	1.89487785818331e-10\\
31	9.40698139733285e-11\\
32	4.79442955719793e-11\\
33	2.38564545527545e-11\\
34	1.21593096642302e-11\\
35	6.06239754137469e-12\\
36	3.09008968422665e-12\\
37	1.54338758473217e-12\\
38	7.86767499740298e-13\\
39	3.93627357709321e-13\\
40	2.00691815353486e-13\\
41	1.00640882058388e-13\\
42	5.13188371333668e-14\\
43	2.58691478322983e-14\\
44	1.31790576677334e-14\\
45	6.7094974582247e-15\\
46	3.39920102499444e-15\\
47	1.69515057583872e-15\\
48	8.49892860282763e-16\\
49	7.11464276804845e-16\\
50	4.00183624781162e-16\\
51	1.8290734040803e-16\\
52	1.37091378067881e-16\\
53	4.94796275088994e-17\\
54	4.94796275088994e-17\\
55	1.87523297863654e-17\\
56	1.21111197029022e-17\\
57	5.02322557497073e-18\\
58	4.54309840007012e-18\\
59	2.60017624918136e-18\\
60	1.17720358119794e-18\\
61	6.38831459966269e-19\\
62	3.13266205081378e-19\\
63	2.74393310593922e-19\\
64	2.74393310593922e-19\\
65	1.88816500891419e-19\\
66	1.81053056109519e-19\\
67	1.29590332928363e-19\\
68	6.53002098019709e-20\\
69	4.52190898105276e-20\\
70	4.0631932863784e-20\\
71	4.0631932863784e-20\\
72	2.75705860590856e-20\\
73	2.19408123787932e-20\\
74	2.19408123787932e-20\\
75	1.59914901035726e-20\\
76	1.59914901035726e-20\\
77	9.56853754066448e-21\\
78	6.33632407388479e-21\\
79	6.06118190793237e-21\\
80	3.70436433563131e-21\\
81	3.70436433563131e-21\\
82	3.52911002222037e-21\\
83	1.92539888507961e-21\\
84	1.80768529101389e-21\\
85	1.00364316191861e-21\\
86	4.88438205743009e-22\\
87	4.88438205743009e-22\\
88	4.4116755540395e-22\\
89	2.23202154812294e-22\\
90	2.23202154812294e-22\\
91	1.55977704844988e-22\\
92	1.55977704844988e-22\\
93	1.49552669776656e-22\\
94	6.74204481170499e-23\\
95	3.98731664363674e-23\\
96	3.98731664363674e-23\\
97	2.17429493560255e-23\\
98	2.17429493560255e-23\\
99	2.07432349926921e-23\\
100	2.07432349926921e-23\\
};
\addplot [color=mycolor2!20, forget plot,, line width = 1.2pt]
  table[row sep=crcr]{%
12	1e-25\\
12	1\\
};
\addplot [color=mycolor2!20, forget plot,, line width = 1.2pt]
  table[row sep=crcr]{%
1	1e-7\\
400	1e-7\\
};

\end{axis}
\end{tikzpicture}

\caption{Time-delay example: Relative decay of the singular values using \Cref{algo:Inter_structured_para} and structured balanced truncation.}
\label{fig:delay_SVD}
\end{figure}

\begin{figure}[!tb]
    \centering
    \begin{tikzpicture}
	\begin{customlegend}[legend columns=2, legend style={/tikz/every even column/.append style={column sep=0.25cm}} , legend entries={Orig. sys. $(n = 500)$, Algorithm 1 $(r = 12)$, Balanced truncation $(r = 12)$}, ]
	    \addlegendimage{color=black,solid,line width=1.2pt,forget plot}
	    \addlegendimage{color=purple,line width=1.3pt,dotted, mark = *, mark repeat = 5,mark options={solid}, mark size = 1pt}
	    \addlegendimage{color=green!60!black,line width=1.3pt,dashed,mark=o,mark options={solid},forget plot, mark repeat = 15}
	\end{customlegend}
    \end{tikzpicture}

    \begin{subfigure}[t]{1\textwidth}
	\centering
	\setlength\fheight{2.5cm}
	\setlength\fwidth{.75\textwidth}
    \tikzsetnextfilename{Figures/Delay_TF}%
    \input{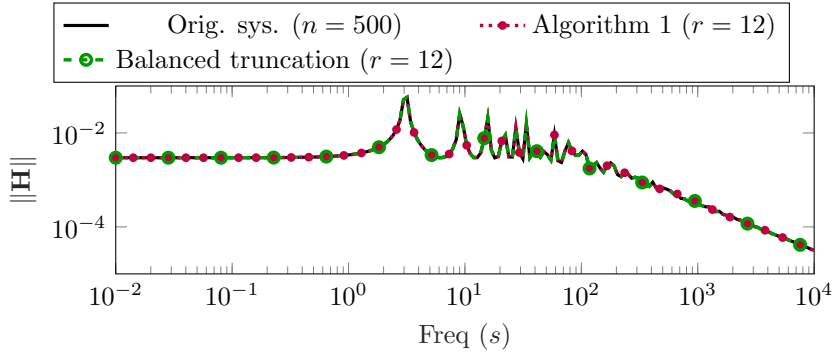}%

	\caption{The Bode plots of the original and reduced-order systems.}
    \end{subfigure}%

    \begin{subfigure}[t]{1\textwidth}
	\centering
	\setlength\fheight{2.5cm}
	\setlength\fwidth{.75\textwidth}
    \tikzsetnextfilename{Figures/Delay_TF_Error}%
    \input{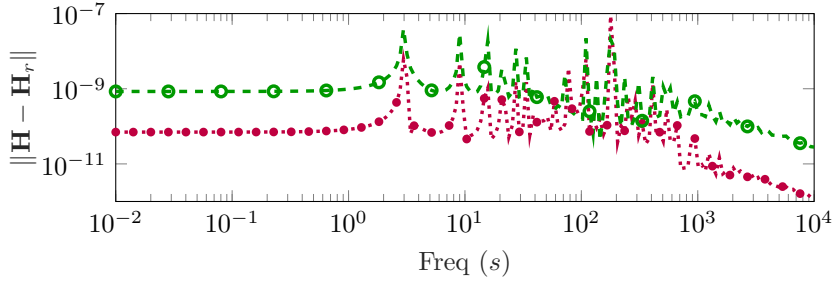}%

	\caption{The Bode plots of the errors of the reduced-order systems.}
    \end{subfigure}
    \caption{Time-delay example: The figure presents a comparison of the Bode plots of the original and reduced-order systems.}
    \label{fig:delay_exam_TF}
\end{figure}

\subsection{Heat Equation with Fading Memory} We consider the heat equation with fading memory presented in \cite{breiten2016structure}. The  example arises within the context of heat conduction in materials with fading memory. Its dynamics is governed by the following integro-PDE:
\begin{align}
\begin{split}
v_t(t,x) &= \grad v(t,x) - \int_0^t \gamma(t-s)\grad v(s,x) ds +\chi_{\omega} \bu(t) ~\quad \text{in $(0, \infty)\times \Omega$},\\
v(t,x) &= 0 \quad \text{in $(0, \infty)\times \Gamma$}, \qquad v(0,x) =0 \quad \text{in $\Omega$}, \\
v_{obs}(\cdot) &= \int_{\Omega} v(\cdot, x) dx,
\end{split}
\end{align}
where $\gamma = 1.05$, $\Omega = (0,1) \times (0,1)$, $\Gamma$ denotes the boundary of $\Omega$ and $\chi_{\omega}$ is the characteristic function of the control set $\omega = [0.15, 0.25] \times [0.2,0.3] \subset \Omega$, i.e.,
\[ \chi_{\omega}(x) :=
\begin{cases}
1, \quad \text{if $x\in\omega$},  \\ 
0, \quad \text{otherwise}.
\end{cases}
\]
As discussed in \cite{breiten2016structure}, after spatial discretization by a finite difference method, we obtain a Volterra integro-differential system whose transfer function is given by
\begin{align}
\bH(s) = \bC\left(s\bI-\bA + \frac{1}{s+ \gamma}\bA\right)^{-1}\bB.
\end{align}
We consider $128$ grid points in each direction, leading to a system of order $n=16,384$. In \Cref{fig:Heat_FM_SVD}, we first plot the relative singular values obtained using \cref{algo:Inter_structured_para} and  balanced truncation from \cite{breiten2016structure}. The figure shows that a very low-order model is possible to obtain having very high accuracy. We determine reduced-order systems of order $r = 3$ using both methods. We compare both reduced-order systems in \Cref{fig:heat_bode}, which shows that the reduced-order systems are comparable. Moreover, in \Cref{fig:heat_h2error}, we show the $\cH_2$-norm of the error systems with respect to the order of the reduced-order system. This also indicates that both methods produce reduced-order systems of very similar quality. 

\begin{figure}[!tb]
	\centering
	\definecolor{mycolor1}{rgb}{0.00000,0.44700,0.74100}%
	\definecolor{mycolor2}{rgb}{0.85000,0.32500,0.09800}%
	\definecolor{mycolor3}{rgb}{0.92900,0.69400,0.12500}%
	\begin{tikzpicture}
	\begin{customlegend}[legend columns=-1, legend style={/tikz/every even column/.append style={column sep=1cm}} , legend entries={Algorithm 1, Balanced truncation }, ]
	\addlegendimage{mycolor1, line width = 1.2pt}
	\addlegendimage{black!30!green,dashed, line width = 1.2pt}
	\end{customlegend}
	\end{tikzpicture}
	
	\setlength\fheight{3.0cm}
	\setlength\fwidth{.5\textwidth}
    \tikzsetnextfilename{Figures/Heat_FM_DecaySV}%
    \input{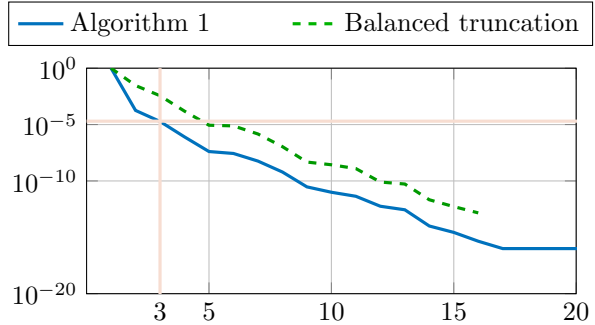}%

	\caption{Heat equation with fading memory: Relative decay of the singular values using \Cref{algo:Inter_structured_para} and structured balanced truncation.}
	\label{fig:Heat_FM_SVD}
\end{figure}

\begin{figure}[!tb]
	\centering
	\begin{tikzpicture}
	\begin{customlegend}[legend columns=2, legend style={/tikz/every even column/.append style={column sep=0.25cm}} , legend entries={Orig. sys. $(n = 16{,}384)$, Algorithm 1 $(r = 3)$, Balanced truncation $(r = 3)$}, ]
	\addlegendimage{color=black,solid,line width=1.2pt,forget plot}
	\addlegendimage{color=purple,line width=1.3pt,dotted, mark = *, mark repeat = 5,mark options={solid}, mark size = 1pt}
	\addlegendimage{color=green!60!black,line width=1.3pt,dashed,mark=o,mark options={solid},forget plot, mark repeat = 15}
	\end{customlegend}
	\end{tikzpicture}
	
	\begin{subfigure}[t]{1\textwidth}
		\centering
		\setlength\fheight{2.5cm}
		\setlength\fwidth{.36\textwidth}
		{\small %
    \tikzsetnextfilename{Figures/Heat_FM_TF}%
    \input{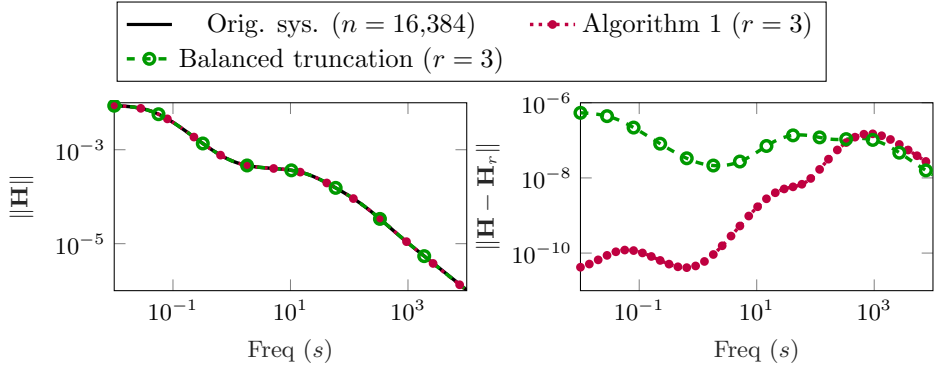}%
\hspace{-0.1cm}
    \tikzsetnextfilename{Figures/Heat_FM_TF_Error}%
    \input{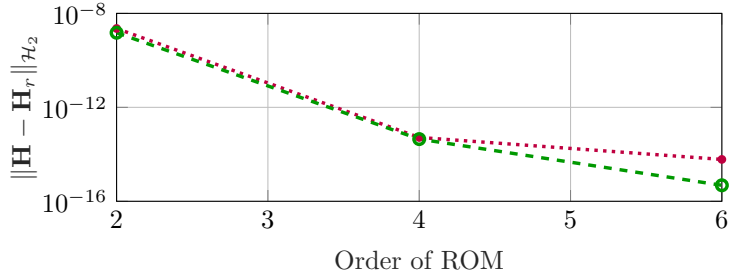}%

		}
		\caption{The Bode plots of the original, reduced-order systems, and the error system.}
		\label{fig:heat_bode}
	\end{subfigure}%
	
	\begin{subfigure}[t]{1\textwidth}
		\centering
		\setlength\fheight{2.5cm}
		\setlength\fwidth{.65\textwidth}
    \tikzsetnextfilename{Figures/Heat_FM_H2error}%
%
%
\definecolor{mycolor1}{rgb}{0.00000,0.44700,0.74100}%
\definecolor{mycolor2}{rgb}{0.85000,0.32500,0.09800}%
\begin{tikzpicture}

\begin{axis}[%
width=0.951\fwidth,
height=\fheight,
at={(0\fwidth,0\fheight)},
scale only axis,
xmin=2,
xmax=6,
xlabel style={font=\color{white!15!black}},
xlabel={Order of ROM},
ymode=log,
ymin=1e-16,
ymax=1e-08,
yminorticks=true,
grid = major,
xtick = {2,3,4,5,6}, 
ylabel style={font=\color{white!15!black}},
ylabel={$\|\bH - \bH_r\|_{\cH_2}$},
axis background/.style={fill=white},
legend style={legend cell align=left, align=left, draw=white!15!black}
]
\addplot [color=purple,line width=1.3pt,dotted, mark = *, mark repeat = 1,mark options={solid}, mark size = 1pt]
  table[row sep=crcr]{%
2	2.2759190699871e-09\\
4	5.22840448640203e-14\\
6	6.06803427986906e-15\\
};

\addplot [color=green!60!black,line width=1.3pt,dashed,mark=o,mark options={solid},forget plot, mark repeat = 1]
  table[row sep=crcr]{%
2	1.50157979166465e-09\\
4	4.44797768609839e-14\\
6	4.80382053285175e-16\\
};

\end{axis}
\end{tikzpicture}

		\caption{The Bode plots of the relative errors of the reduced-order systems.}
		\label{fig:heat_h2error}
	\end{subfigure}
	\caption{Heat equation with fading memory: The figure presents a comparison of the Bode plots of the original and reduced-order systems for order $r = 3$, and $\cH_2$-norm of the error with respect to the order of the reduced-order systems.}
	\label{fig:Heat_FM_exam_TF}
\end{figure}

\subsection{Fractional Maxwell equations} Next, we consider the example mentioned in the introduction arising in computational electromagnetics, see \cite{morFenB10}. 
As described in the introduction, the transfer function of this example has a fractional integrator of the form:
\begin{equation}
\bH(s) = s\bB^T\left(s^2\bI - \frac{1}{\sqrt{s}}\bD +\bA\right)^{-1}\bB,
\end{equation}
where $n = 29,295$ is the order, $\bI,\bA$ and $\bD$ are $n \times n$ matrices and $\bB$ is a $1\times n$ matrix. This example is a limited frequency range and the interesting frequency range is $\cF:=\bbm 4e9,8e9\ebm$Hz. We aim at employing \cref{algo:Inter_structured_para} and balanced truncation.  Although the proposed balanced truncation \cite{breiten2016structure} is not designed for limited frequency range, one can integrate within $\cF$ range instead of $\bbm 0,\infty\ebm$ to determine the Gramians. However, we observe that for this example, the methodology proposed in \cite{breiten2016structure} to  obtain an approximation in low-rank factor does not converge. As discussed in \cite{breiten2016structure}, the development of low-rank solvers for Gramians of structure systems  needs some future research which is highly relevant to this. Moreover, since the example is a large-scale one, it is not possible to apply the simple \texttt{quadv} function to obtain approximations of Gramians as we did in the delay example. 

We instead compare our methodology to the method proposed in \cite{morFenB10}, where a reduced-order system is obtained using moment-matching based on a single expansion point. In order to employ \Cref{algo:Inter_structured_para}, we take $50$ points in the logarithm scale in the frequency range of interest. First, we plot the relative decay of the singular values in \Cref{fig:ElecMagnetic_SVD}. Ideally, we would like to determine a reduced-order system of order $r = 38$, which is reported in \cite{morFenB10}. However, our decay of singular values indicates that the order more than $10$ would not improve the quality of the reduced-order system as the singular values go to the level of machine precision. Therefore, we determine the reduced-order system of  order $r=8$ via our method and compare with the reduced-order system of  order $r = 38$ as constructed in \cite{morFenB10}.  In \cref{fig:ElecMagnetic_TF}, we compare both reduced-order systems. The figure suggests that our reduced-order systems outperformed the one reported in \cite{morFenB10}. Importantly, notice that this is achieved even though our reduced-order system has an order more than four times smaller than the one from \cite{morFenB10}. 

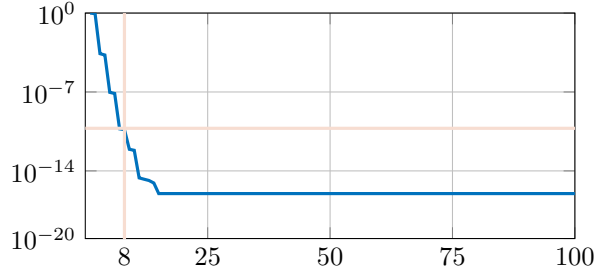
\begin{figure}[!tb]
	\centering
	\definecolor{mycolor1}{rgb}{0.00000,0.44700,0.74100}%
	\definecolor{mycolor2}{rgb}{0.85000,0.32500,0.09800}%
	\definecolor{mycolor3}{rgb}{0.92900,0.69400,0.12500}%
	
	\setlength\fheight{3.0cm}
	\setlength\fwidth{.5\textwidth}
    \tikzsetnextfilename{Figures/ElecMagnetic_DecaySV}%
%
%
\definecolor{mycolor1}{rgb}{0.00000,0.44700,0.74100}%
\definecolor{mycolor2}{rgb}{0.85000,0.32500,0.09800}%
\begin{tikzpicture}

\begin{axis}[%
width=1\fwidth,
height=\fheight,
at={(0\fwidth,0\fheight)},
scale only axis,
xmin=0,
xmax=100,
xlabel style={font=\color{white!15!black}},
ymode=log,
ymin=1e-20,
ymax=1,
yminorticks=true,
grid = major,
xtick = {8,25,50,75,100}, 
ytick = {1e0,1e-7,1e-14,1e-20},
ylabel style={font=\color{white!15!black}},
axis background/.style={fill=white},
title style={font=\bfseries},
]
\addplot [color=mycolor1, line width = 1.2pt]
  table[row sep=crcr]{%
1	1\\
2	0.884614171445349\\
3	0.000249918852978569\\
4	0.000189958064040618\\
5	9.24742998588644e-08\\
6	7.35719403485192e-08\\
7	5.39726007873654e-11\\
8	4.55454595795023e-11\\
9	8.25873681515395e-13\\
10	6.6933115671011e-13\\
11	2.40290293940316e-15\\
12	1.82934334931319e-15\\
13	1.41427247060408e-15\\
14	8.41265119399197e-16\\
15	9.7164541084774e-17\\
16	9.7164541084774e-17\\
17	9.7164541084774e-17\\
18	9.7164541084774e-17\\
19	9.7164541084774e-17\\
20	9.7164541084774e-17\\
21	9.7164541084774e-17\\
22	9.7164541084774e-17\\
23	9.7164541084774e-17\\
24	9.7164541084774e-17\\
25	9.7164541084774e-17\\
26	9.7164541084774e-17\\
27	9.7164541084774e-17\\
28	9.7164541084774e-17\\
29	9.7164541084774e-17\\
30	9.7164541084774e-17\\
31	9.7164541084774e-17\\
32	9.7164541084774e-17\\
33	9.7164541084774e-17\\
34	9.7164541084774e-17\\
35	9.7164541084774e-17\\
36	9.7164541084774e-17\\
37	9.7164541084774e-17\\
38	9.7164541084774e-17\\
39	9.7164541084774e-17\\
40	9.7164541084774e-17\\
41	9.7164541084774e-17\\
42	9.7164541084774e-17\\
43	9.7164541084774e-17\\
44	9.7164541084774e-17\\
45	9.7164541084774e-17\\
46	9.7164541084774e-17\\
47	9.7164541084774e-17\\
48	9.7164541084774e-17\\
49	9.7164541084774e-17\\
50	9.7164541084774e-17\\
51	9.7164541084774e-17\\
52	9.7164541084774e-17\\
53	9.7164541084774e-17\\
54	9.7164541084774e-17\\
55	9.7164541084774e-17\\
56	9.7164541084774e-17\\
57	9.7164541084774e-17\\
58	9.7164541084774e-17\\
59	9.7164541084774e-17\\
60	9.7164541084774e-17\\
61	9.7164541084774e-17\\
62	9.7164541084774e-17\\
63	9.7164541084774e-17\\
64	9.7164541084774e-17\\
65	9.7164541084774e-17\\
66	9.7164541084774e-17\\
67	9.7164541084774e-17\\
68	9.7164541084774e-17\\
69	9.7164541084774e-17\\
70	9.7164541084774e-17\\
71	9.7164541084774e-17\\
72	9.7164541084774e-17\\
73	9.7164541084774e-17\\
74	9.7164541084774e-17\\
75	9.7164541084774e-17\\
76	9.7164541084774e-17\\
77	9.7164541084774e-17\\
78	9.7164541084774e-17\\
79	9.7164541084774e-17\\
80	9.7164541084774e-17\\
81	9.7164541084774e-17\\
82	9.7164541084774e-17\\
83	9.7164541084774e-17\\
84	9.7164541084774e-17\\
85	9.7164541084774e-17\\
86	9.7164541084774e-17\\
87	9.7164541084774e-17\\
88	9.7164541084774e-17\\
89	9.7164541084774e-17\\
90	9.7164541084774e-17\\
91	9.7164541084774e-17\\
92	9.7164541084774e-17\\
93	9.7164541084774e-17\\
94	9.7164541084774e-17\\
95	9.7164541084774e-17\\
96	9.7164541084774e-17\\
97	9.7164541084774e-17\\
98	9.7164541084774e-17\\
99	9.7164541084774e-17\\
100	9.7164541084774e-17\\
};
\addplot [color=mycolor2!20, forget plot,, line width = 1.2pt]
  table[row sep=crcr]{%
8	1e-25\\
8	1\\
};
\addplot [color=mycolor2!20, forget plot,, line width = 1.2pt]
  table[row sep=crcr]{%
0	0.6e-10\\
400	0.6e-10\\
};
\end{axis}
\end{tikzpicture}

	\caption{Fractional Maxwell equations: Relative decay of the singular values using \Cref{algo:Inter_structured_para}.}
	\label{fig:ElecMagnetic_SVD}
\end{figure}

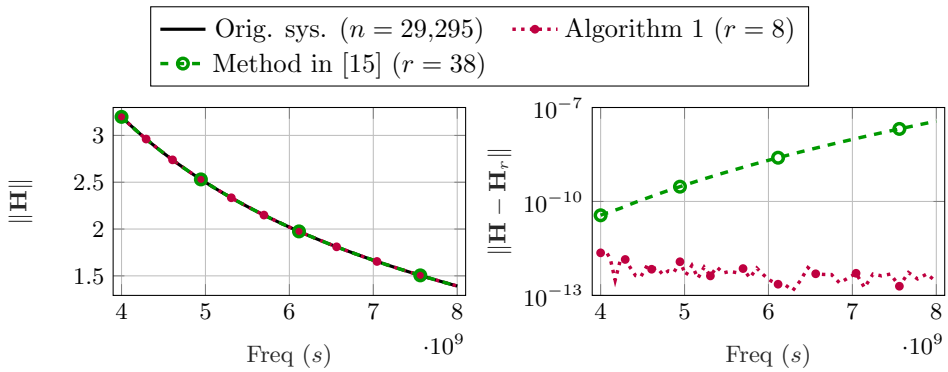
\begin{figure}[!tb]
	\centering
	\begin{tikzpicture}
	\begin{customlegend}[legend columns=2, legend style={/tikz/every even column/.append style={column sep=0.250cm}} , legend entries={Orig. sys. $(n=29{,}295)$, Algorithm 1 $(r=8)$, Method in [15] $(r=38)$}, ]
	\addlegendimage{color=black,solid,line width=1.2pt,forget plot}
	\addlegendimage{color=purple,line width=1.3pt,dotted, mark = *, mark repeat = 5,mark options={solid}, mark size = 1pt}
	\addlegendimage{color=green!60!black,line width=1.3pt,dashed,mark=o,mark options={solid},forget plot, mark repeat = 15}
	\end{customlegend}	\end{tikzpicture}
	\centering
	\setlength\fheight{2.5cm}
	\setlength\fwidth{.36\textwidth}
	{\small %
    \tikzsetnextfilename{Figures/ElecMagnetic_TF}%
%
%
\definecolor{mycolor1}{rgb}{0.00000,0.44700,0.74100}%
\definecolor{mycolor2}{rgb}{0.85000,0.32500,0.09800}%
\definecolor{mycolor3}{rgb}{0.92900,0.69400,0.12500}%
\begin{tikzpicture}

\begin{axis}[%
width=1\fwidth,
height=\fheight,
at={(0\fwidth,0\fheight)},
scale only axis,
xmin=3900000000,
xmax=8100000000,
xlabel style={font=\color{white!15!black}},
xlabel={Freq $(s)$},
ymin=1.29120588471937,
ymax=3.29822885637806,
yminorticks=true,
ylabel style={font=\color{white!15!black}},
ylabel={$\|\bH\|$},
axis background/.style={fill=white},
title style={font=\bfseries},
grid = major,
legend style={legend cell align=left, align=left, draw=white!15!black}
]
\addplot [color=black,solid,line width=1.2pt,forget plot]
  table[row sep=crcr]{%
4000000000	3.19822885634644\\
4056985547.8693	3.14959745033257\\
4114782933.90511	3.10159245897244\\
4173403723.86807	3.05420409816481\\
4232859648.28919	3.00742269864543\\
4293162604.81709	2.96123870358504\\
4354324660.59899	2.91564266622615\\
4416358054.69525	2.87062524749904\\
4479275200.52872	2.82617721363191\\
4543088688.3686	2.78228943375581\\
4607811287.84997	2.73895287749597\\
4673455950.52911	2.69615861255368\\
4740035812.4751	2.65389780226557\\
4807564196.89862	2.61216170315211\\
4876054616.8179	2.57094166243428\\
4945520777.76292	2.53022911553384\\
5015976580.51788	2.49001558353464\\
5087436123.903	2.45029267062936\\
5159913707.59567	2.41105206150519\\
5233423834.99211	2.37228551871306\\
5307981216.10943	2.33398487997497\\
5383600770.52942	2.29614205544544\\
5460297630.38399	2.25874902492358\\
5538087143.38321	2.22179783500288\\
5616984875.88664	2.18528059614601\\
5697006616.01814	2.14918947969204\\
5778168376.82537	2.11351671479324\\
5860486399.48399	2.07825458524198\\
5943977156.54781	2.04339542622261\\
6028657355.24493	2.00893162094885\\
6114543940.82116	1.97485559718681\\
6201654099.93084	1.94115982365466\\
6290005264.07595	1.90783680628651\\
6379615113.09452	1.8748790843478\\
6470501578.69826	1.84227922638516\\
6562682848.06111	1.81002982600849\\
6656177367.45842	1.77812349746704\\
6751003845.95842	1.74655287102779\\
6847181259.16584	1.7153105881182\\
6944728853.01936	1.68438929621859\\
7043666147.64263	1.65378164348507\\
7144012941.25059	1.62348027306589\\
7245789314.11126	1.59347781710407\\
7349015632.5638	1.56376689036581\\
7453712553.09426	1.53434008349604\\
7559901026.46885	1.5051899558293\\
7667602301.92664	1.47630902773843\\
7776837931.43144	1.44768977247165\\
7887629773.98482	1.41932460741098\\
8000000000	1.39120588471937\\
};

\addplot [color=purple,line width=1.3pt,dotted, mark = *, mark repeat = 5,mark options={solid}, mark size = 1pt]
  table[row sep=crcr]{%
4000000000	3.1982288563486\\
4056985547.8693	3.14959745033527\\
4114782933.90511	3.10159245897141\\
4173403723.86807	3.05420409816502\\
4232859648.28919	3.00742269864456\\
4293162604.81709	2.96123870358408\\
4354324660.59899	2.91564266622584\\
4416358054.69525	2.87062524749904\\
4479275200.52872	2.8261772136319\\
4543088688.3686	2.78228943375536\\
4607811287.84997	2.73895287749563\\
4673455950.52911	2.69615861255325\\
4740035812.4751	2.65389780226602\\
4807564196.89862	2.6121617031527\\
4876054616.8179	2.57094166243483\\
4945520777.76292	2.530229115533\\
5015976580.51788	2.49001558353455\\
5087436123.903	2.45029267062872\\
5159913707.59567	2.4110520615055\\
5233423834.99211	2.37228551871382\\
5307981216.10943	2.33398487997479\\
5383600770.52942	2.29614205544487\\
5460297630.38399	2.25874902492409\\
5538087143.38321	2.22179783500346\\
5616984875.88664	2.18528059614566\\
5697006616.01814	2.14918947969269\\
5778168376.82537	2.11351671479306\\
5860486399.48399	2.07825458524147\\
5943977156.54781	2.04339542622237\\
6028657355.24493	2.00893162094879\\
6114543940.82116	1.97485559718672\\
6201654099.93084	1.94115982365458\\
6290005264.07595	1.90783680628665\\
6379615113.09452	1.8748790843477\\
6470501578.69826	1.84227922638593\\
6562682848.06111	1.81002982600896\\
6656177367.45842	1.77812349746745\\
6751003845.95842	1.74655287102823\\
6847181259.16584	1.71531058811844\\
6944728853.01936	1.68438929621894\\
7043666147.64263	1.65378164348467\\
7144012941.25059	1.62348027306601\\
7245789314.11126	1.59347781710365\\
7349015632.5638	1.5637668903662\\
7453712553.09426	1.53434008349644\\
7559901026.46885	1.50518995582918\\
7667602301.92664	1.47630902773894\\
7776837931.43144	1.44768977247197\\
7887629773.98482	1.41932460741144\\
8000000000	1.39120588471956\\
};

\addplot [color=green!60!black,line width=1.3pt,dashed,mark=o,mark options={solid},forget plot, mark repeat = 15]
  table[row sep=crcr]{%
4000000000	3.19822885637806\\
4056985547.8693	3.14959745036932\\
4114782933.90511	3.10159245901074\\
4173403723.86807	3.05420409821042\\
4232859648.28919	3.00742269869697\\
4293162604.81709	2.96123870364454\\
4354324660.59899	2.91564266629559\\
4416358054.69525	2.87062524757949\\
4479275200.52872	2.82617721372467\\
4543088688.3686	2.78228943386233\\
4607811287.84997	2.73895287761893\\
4673455950.52911	2.69615861269537\\
4740035812.4751	2.65389780242981\\
4807564196.89862	2.61216170334144\\
4876054616.8179	2.57094166265228\\
4945520777.76292	2.53022911578349\\
5015976580.51788	2.49001558382307\\
5087436123.903	2.45029267096101\\
5159913707.59567	2.41105206188815\\
5233423834.99211	2.37228551915442\\
5307981216.10943	2.33398488048206\\
5383600770.52942	2.29614205602886\\
5460297630.38399	2.25874902559632\\
5538087143.38321	2.22179783577721\\
5616984875.88664	2.1852805970362\\
5697006616.01814	2.14918948071759\\
5778168376.82537	2.11351671597251\\
5860486399.48399	2.07825458659873\\
5943977156.54781	2.04339542778419\\
6028657355.24493	2.00893162274595\\
6114543940.82116	1.97485559925463\\
6201654099.93084	1.94115982603403\\
6290005264.07595	1.90783680902455\\
6379615113.09452	1.87487908749811\\
6470501578.69826	1.84227923001106\\
6562682848.06111	1.8100298301805\\
6656177367.45842	1.77812350226793\\
6751003845.95842	1.74655287655279\\
6847181259.16584	1.71531059447669\\
6944728853.01936	1.68438930353723\\
7043666147.64263	1.6537816519087\\
7144012941.25059	1.62348028276379\\
7245789314.11126	1.59347782826901\\
7349015632.5638	1.5637669032228\\
7453712553.09426	1.53434009830246\\
7559901026.46885	1.50518997288271\\
7667602301.92664	1.47630904738433\\
7776837931.43144	1.44768979510709\\
7887629773.98482	1.41932463349621\\
8000000000	1.39120591478582\\
};

\end{axis}
\end{tikzpicture}
\hspace{-0.0cm}
    \tikzsetnextfilename{Figures/ElecMagnetic_TF_Error}%
%
%
\definecolor{mycolor1}{rgb}{0.00000,0.44700,0.74100}%
\definecolor{mycolor2}{rgb}{0.85000,0.32500,0.09800}%
\begin{tikzpicture}

\begin{axis}[%
width=1\fwidth,
height=\fheight,
at={(0\fwidth,0\fheight)},
scale only axis,
xmin=3900000000,
xmax=8100000000,
xlabel style={font=\color{white!15!black}},
xlabel={Freq $(s)$},
ymode=log,
ymin=1e-13,
ymax=1e-07,
yminorticks=true,
ylabel style={font=\color{white!15!black}},
ylabel={$\|\bH - \bH_r\|$},
axis background/.style={fill=white},
title style={font=\bfseries},
grid = major,
legend style={legend cell align=left, align=left, draw=white!15!black}
]
\addplot [color=purple,line width=1.3pt,dotted, mark = *, mark repeat = 5,mark options={solid}, mark size = 1pt]
  table[row sep=crcr]{%
4000000000	2.28604386260158e-12\\
4056985547.8693	2.69932148168465e-12\\
4114782933.90511	1.49642376218763e-12\\
4173403723.86807	3.06597433530176e-13\\
4232859648.28919	1.27006018421157e-12\\
4293162604.81709	1.40532468262056e-12\\
4354324660.59899	7.6254529178311e-13\\
4416358054.69525	4.29007163189999e-13\\
4479275200.52872	3.95118450599682e-13\\
4543088688.3686	8.03376148643984e-13\\
4607811287.84997	6.84541196685567e-13\\
4673455950.52911	7.64101967525183e-13\\
4740035812.4751	4.63387474764039e-13\\
4807564196.89862	6.05658027147689e-13\\
4876054616.8179	5.60937504891442e-13\\
4945520777.76292	1.18822263811453e-12\\
5015976580.51788	4.45339518013672e-13\\
5087436123.903	9.07257573806506e-13\\
5159913707.59567	3.55541270644021e-13\\
5233423834.99211	7.98550086939225e-13\\
5307981216.10943	4.19148632551134e-13\\
5383600770.52942	8.06482442417332e-13\\
5460297630.38399	5.2099799502918e-13\\
5538087143.38321	5.8442982578545e-13\\
5616984875.88664	5.35742602553499e-13\\
5697006616.01814	7.19831577001882e-13\\
5778168376.82537	3.61795881974465e-13\\
5860486399.48399	7.45509395609158e-13\\
5943977156.54781	4.44443044338569e-13\\
6028657355.24493	2.92346949785101e-13\\
6114543940.82116	2.2779165589924e-13\\
6201654099.93084	2.13654429932183e-13\\
6290005264.07595	1.43416292793927e-13\\
6379615113.09452	2.94967723026489e-13\\
6470501578.69826	7.7263059065811e-13\\
6562682848.06111	4.90289452796687e-13\\
6656177367.45842	4.67344747419835e-13\\
6751003845.95842	4.63167501956185e-13\\
6847181259.16584	2.50088074969178e-13\\
6944728853.01936	3.54291498142988e-13\\
7043666147.64263	5.09372271423513e-13\\
7144012941.25059	1.77492032369856e-13\\
7245789314.11126	4.80356155811055e-13\\
7349015632.5638	4.20154120553627e-13\\
7453712553.09426	4.81582371841351e-13\\
7559901026.46885	1.96479324210296e-13\\
7667602301.92664	5.37967320749453e-13\\
7776837931.43144	3.35558116376968e-13\\
7887629773.98482	4.83926176289569e-13\\
8000000000	2.725566589055e-13\\
};

\addplot [color=green!60!black,line width=1.3pt,dashed,mark=o,mark options={solid},forget plot, mark repeat = 15]
  table[row sep=crcr]{%
4000000000	3.61465218358413e-11\\
4056985547.8693	4.19991243619995e-11\\
4114782933.90511	4.44958680731669e-11\\
4173403723.86807	5.28748895803477e-11\\
4232859648.28919	6.00598607416698e-11\\
4293162604.81709	6.94943941427464e-11\\
4354324660.59899	8.11747147320157e-11\\
4416358054.69525	9.41628992054403e-11\\
4479275200.52872	1.08799715930726e-10\\
4543088688.3686	1.25256850337837e-10\\
4607811287.84997	1.44844475754706e-10\\
4673455950.52911	1.67202009243672e-10\\
4740035812.4751	1.93971901097111e-10\\
4807564196.89862	2.23956457240099e-10\\
4876054616.8179	2.58293455399299e-10\\
4945520777.76292	2.96503465571619e-10\\
5015976580.51788	3.42886979454408e-10\\
5087436123.903	3.94893024761136e-10\\
5159913707.59567	4.5637732591065e-10\\
5233423834.99211	5.26529532631693e-10\\
5307981216.10943	6.05858760404296e-10\\
5383600770.52942	6.97889103159243e-10\\
5460297630.38399	8.05375119128487e-10\\
5538087143.38321	9.27923171335486e-10\\
5616984875.88664	1.06800729901476e-09\\
5697006616.01814	1.23131750726197e-09\\
5778168376.82537	1.41731578426434e-09\\
5860486399.48399	1.63205916982992e-09\\
5943977156.54781	1.87987950389318e-09\\
6028657355.24493	2.16500527140016e-09\\
6114543940.82116	2.49297006471766e-09\\
6201654099.93084	2.87054939475758e-09\\
6290005264.07595	3.30540539933043e-09\\
6379615113.09452	3.80554310420319e-09\\
6470501578.69826	4.38250223214485e-09\\
6562682848.06111	5.04551726116183e-09\\
6656177367.45842	5.80925885160899e-09\\
6751003845.95842	6.68891357310574e-09\\
6847181259.16584	7.70182507461417e-09\\
6944728853.01936	8.86896022697902e-09\\
7043666147.64263	1.02127142124626e-08\\
7144012941.25059	1.17624458193887e-08\\
7245789314.11126	1.35473348931186e-08\\
7349015632.5638	1.56061540048967e-08\\
7453712553.09426	1.79788601573401e-08\\
7559901026.46885	2.07144418340444e-08\\
7667602301.92664	2.38709701654202e-08\\
7776837931.43144	2.75117953200304e-08\\
7887629773.98482	3.1713751026983e-08\\
8000000000	3.65638296696173e-08\\
};

\end{axis}
\end{tikzpicture}

	}
	\caption{Fractional Maxwell equations: The Bode plots of the original, reduced-order systems, and the error system.}
	\label{fig:ElecMagnetic_TF}
\end{figure}

\subsection{Parametric Anemometer} An anemometer, also known as a thermal mass flow meter, has sensors, namely heater and temperature after and before the heater in the direction of the flow as shown in \Cref{fig:Aneomometer_model}. Due to the circulation of the flow, a temperature difference occurs between the sensors. Measuring the temperature difference allows us to estimate the fluid flow, for more details see \cite{miller1983flow}. 
\begin{figure}[!tb]
\centering
 \includegraphics[width = 0.8\textwidth]{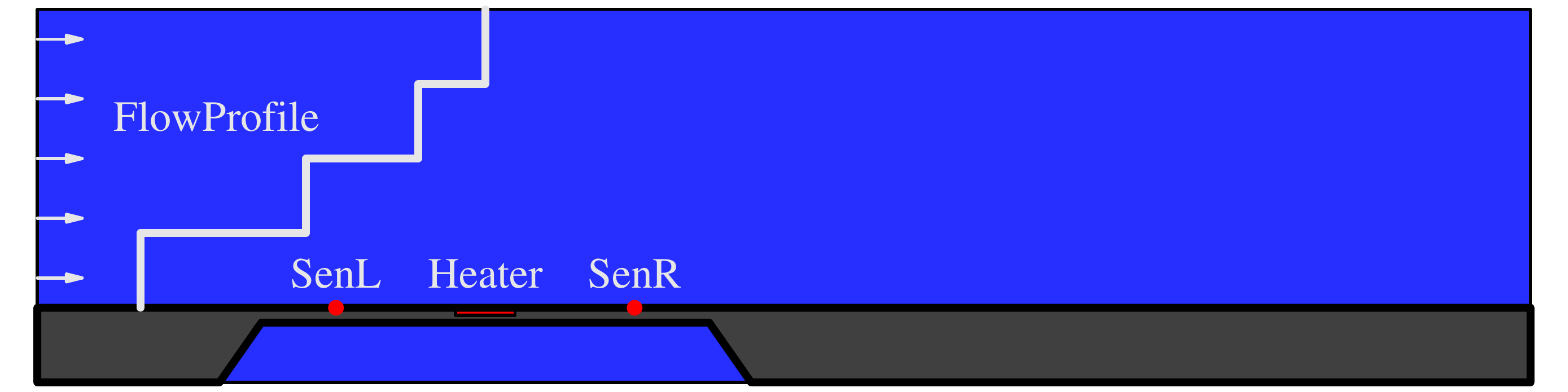}
   \caption[Schematic diagram of a two-dimensional anemometer]{Schematic diagram of a two-dimensional anemometer, cf. \cite{morwiki_anemom}.}
   \label{fig:Aneomometer_model}
\end{figure}

The dynamics of the anemometer is governed by the convention-diffusion PDE as follows:
\begin{equation}
 \rho c\dfrac{\partial T}{\partial t} = \nabla \cdot \left(\kappa \nabla T\right) - \rho c v \nabla T + \dot q,
\end{equation}
where $\rho$ represents the mass density; $c$, $\kappa$, $v$ are the specific heat,  thermal conductivity, and fluid velocity, respectively. Moreover, $\dot q$ denotes the heat flow into the system caused by the heater. We set $\rho =1$ and consider all other fluid properties as parameters. A discretization of the PDE leads to a parametric system whose transfer function is given as follows:
\begin{equation*}
 \bH(s,\bp) = \bC\left(s\left(\bE_0 + p_1 \bE_1\right) - \left(\bA_0 + p_2 \bA_1 + p_3 \bA_2\right)\right)^{-1}\bB,
\end{equation*}
where $\bp \in \R^3$ and $p_i$ denotes the $i$th component of the vector $\bp$, which is given in terms of the fluid properties as follows:
\begin{equation*}
 \bp = \bbm p_1\\p_2\\p_3\ebm = \bbm c \\ \kappa \\ cv \ebm.
\end{equation*}
The output matrix $\bC$ is chosen such that it yields the output as the difference between the two sensors. For more details, we refer to \cite{moosmann2005parameter} and for the model, to \cite{morwiki_anemom}. A finite-element discretization yields a parametric model of order $n = 29,008$, and the model essentially has three parameters. 

In order to apply the proposed method, we take $300$ points for frequency $s$ in the logarithmic way and $300$ random points for the parameters. For frequency, we consider the range $\bbm 10^1,10^5\ebm$, and the parameters are generally considered in the intervals: $c \in \bbm 0,1\ebm$, $\kappa \in \bbm 1,2\ebm$, and $v \in \bbm 0.1,2\ebm$. First, in \Cref{fig:Anemometer_SVD}, we plot the singular values, obtained by employing \Cref{algo:Inter_structured_para}, which exhibits a rapid decay. We now determine reduced-order systems of order $r = 78$, meaning that we consider the singular vectors corresponding to the relative singular values up-to $10^{-6}$. We compare the Bode plots of the original and reduced-order systems in \Cref{fig:Anemometer_TF_bode} for two different parameter values, which clearly match for both parameters all frequencies. 

Moreover, in \cref{fig:Anemometer_exam_TF}, we plot the Bode diagram of the error systems, the difference of the original and reduced-order systems, for different parameter configurations. The obtained reduced-order system captures the dynamics of the original systems very well.   From the figures, it can be seen that  the error is below $2\cdot10^{-7}$ for all frequencies and four considered parameter configurations.

\begin{figure}[!tb]
\centering
\setlength\fheight{3.0cm}
\setlength\fwidth{.5\textwidth}
{\small %
    \tikzsetnextfilename{Figures/Anemometer_DecaySV}%
    \input{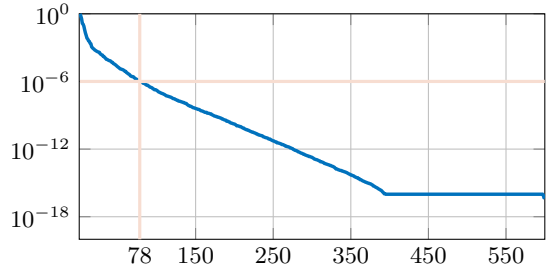}%
}
\caption{Anemometer example: relative decay of the singular values using \Cref{algo:Inter_structured_para}.}
\label{fig:Anemometer_SVD}
\end{figure}

\begin{figure}[!tb]
	\begin{tikzpicture}
	\begin{customlegend}[legend columns=2, legend style={/tikz/every even column/.append style={column sep=0.25cm}} , legend entries={Orig. sys. $(n = 29{,}008)$, Algorithm 1 $(r = 78)$, Balanced truncation $(r = 3)$}, ]
	\addlegendimage{color=blue, forget plot,line width = 1.3pt}
	\addlegendimage{color=green, dashed,line width = 1.3pt}
	\end{customlegend}
	\end{tikzpicture}
    \centering
	\setlength\fheight{2.5cm}
	\setlength\fwidth{.75\textwidth}
    \tikzsetnextfilename{Figures/Anemometer_TF}%
    \input{Figures/Anemometer_TF.tikz}%

	\caption{Anemometer example: The Bode plots of the original and reduced-order systems for parameter values $\bp^{(1)} : \left(0,1,0.1\right)$ and $\bp^{(2)}:\left(0.67,1.67,1.37\right)$.}
	\label{fig:Anemometer_TF_bode}
\end{figure}
\begin{figure}
	\centering
	\setlength\fheight{2.5cm}
	\setlength\fwidth{.75\textwidth}
    \tikzsetnextfilename{Figures/Anemometer_TF_Error}%
    \input{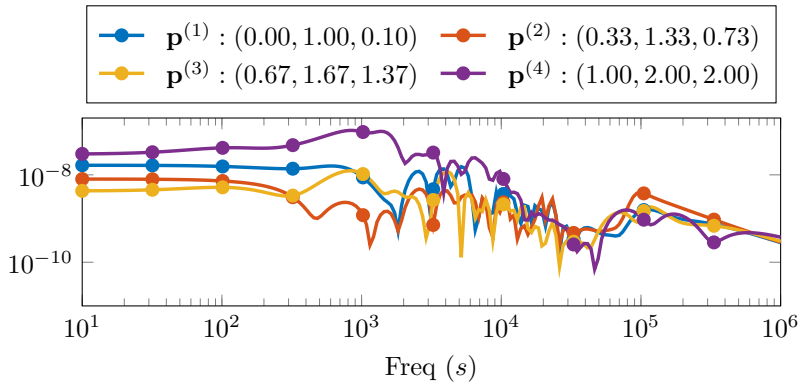}%

	\caption{Anemometer example: The Bode plots of the absolute error between the original and the reduced-order systems for four different parameter values.}
    \label{fig:Anemometer_exam_TF}
\end{figure}

\subsection{Parametric Butterfly Gyroscope}
As the last example, we consider a parametric butterfly gyroscope example.  The butterfly is a vibrating micro-mechanical gyro, which  measures angular rates in up to three axes. It  is used in inertial navigation applications. The model has mainly two parameters of interest: the rotation velocity $\theta$ around the $x$-axis and the width of bearing $d$ as shown in \Cref{fig:Gyro_diagram}. The system and its model reduction problem  has been extensively  studied in \cite{morMoo07}. A finite element discretization leads to a parametric model  for the gyroscope of the form:
\begin{figure}[!b]
	\centering
	\includegraphics[height = 3cm]{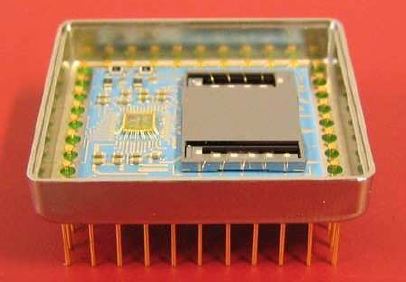}\hspace{0.5cm}
	\includegraphics[height = 3cm]{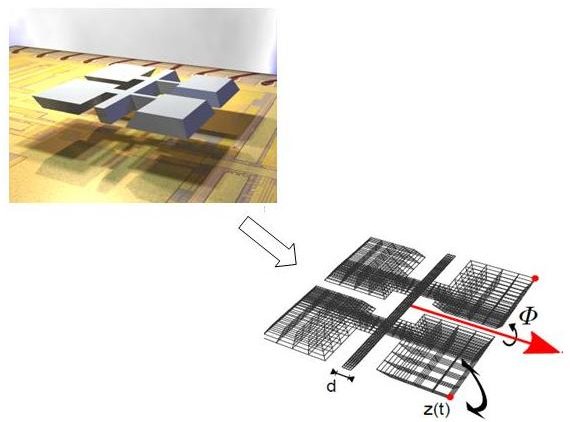}
	\caption{Semantic gyroscope diagram \cite{morwiki_modgyro}.}
	\label{fig:Gyro_diagram}
\end{figure}
\begin{equation}
\begin{aligned}
\bM(d) \ddot \bx(t) + \bD(d,\theta) \dot \bx(t) + \bK(\theta) &= \bB \bu(t),\\
\by(t) &= \bC \bx(t),
\end{aligned}
\end{equation}
where $\bM(d)  = \bM_1 + d\bM_2 \in \Rn, 
\bD(d,\theta) = \theta \left(\bD_1 + d \bD_2\right),\in \Rn$,
$\bK(d) = \bT_1 + \frac{1}{d}\bT_2 + d\bT_3,\in \Rn$, $\bB \in \Rn$, and $\bC^T \in \Rn$; $\bx(t)\in \Rn$, $\bu(t)\in \Rm$, and $\bu(t)\in \Rq$ are the state, input and output vectors, respectively. Typical ranges for the parameters $\theta$ and $d$ are $\bbm 10^{-5},10^{-7}\ebm$ and $\bbm 1,2\ebm$, respectively. Finite-element discretization leads to $ n = 17,913$. Normally, the system is operated in the frequency range  $2\pi\cdot\bbm 0.025,0.25\ebm$. For more details on the model, we refer the reader to \cite{morMoo07,morwiki_modgyro}.

\begin{figure}[!tb]
\centering
\setlength\fheight{3.0cm}
\setlength\fwidth{.5\textwidth}
{\small
    \tikzsetnextfilename{Figures/Gyro_DecaySV}%
    \input{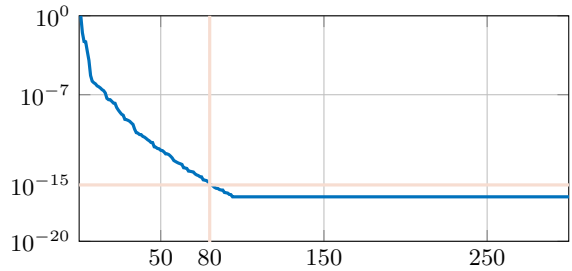}%

}
\caption{Gyro example: relative decay of the singular values obtained using \Cref{algo:Inter_structured_para}.}
\label{fig:Gyro_SVD}
\end{figure}

\begin{figure}[!tb]
    \centering
	\definecolor{mycolor1}{rgb}{0.00000,0.44700,0.74100}%
	\definecolor{mycolor2}{rgb}{0.85000,0.32500,0.09800}%
	\definecolor{mycolor3}{rgb}{0.92900,0.69400,0.12500}%
        \begin{tikzpicture}
	\begin{customlegend}[legend columns=2, legend style={/tikz/every even column/.append style={column sep=0.250cm}} , legend entries={Orig. sys. $(n=17{,}913)$, Algorithm 1 $(r=80)$, Method in [14] $(r=210)$}, ]
	    \addlegendimage{color=mycolor1,solid,line width=1.2pt}
	    \addlegendimage{color=mycolor2,line width=1.3pt,dashed}
	    \addlegendimage{color=green!70!black,line width=1.3pt,dotted}
	\end{customlegend}
    \end{tikzpicture}
        \begin{subfigure}[t]{1\textwidth}
	\centering
	\setlength\fheight{4.5cm}
	\setlength\fwidth{.75\textwidth}
	{\small 
    \tikzsetnextfilename{Figures/Gyro_TF}%
    \input{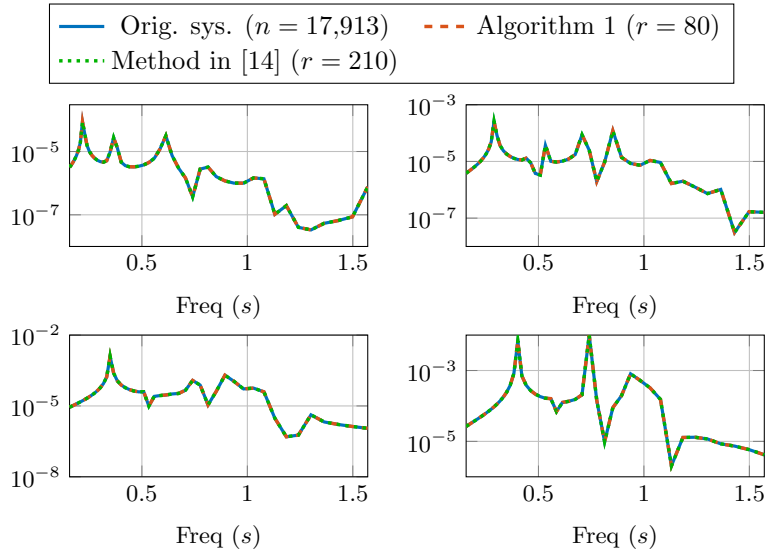}%

	}
	\caption{The Bode plots of the original and reduced-order systems for parameter values $\bp^{(1)}: \left(1.00,10^{-7}\right)$ at top-left, $\bp^{(2)}:\left(1.33,4.64\cdot 10^{-7}\right)$ at top-right, $\bp^{(3)}: \left(1.67,2.15\cdot 10^{-6}\right)$ at bottom-left and $\bp^{(4)}:\left(2.00,10^{-5}\right)$ at top-right.}
	\label{fig:Gyro_TF_bode}
\end{subfigure}
    \begin{subfigure}[t]{1\textwidth}
	\centering
	\setlength\fheight{4.5cm}
	\setlength\fwidth{.75\textwidth}
	{\small
    \tikzsetnextfilename{Figures/Gyro_TF_Error}%
    \input{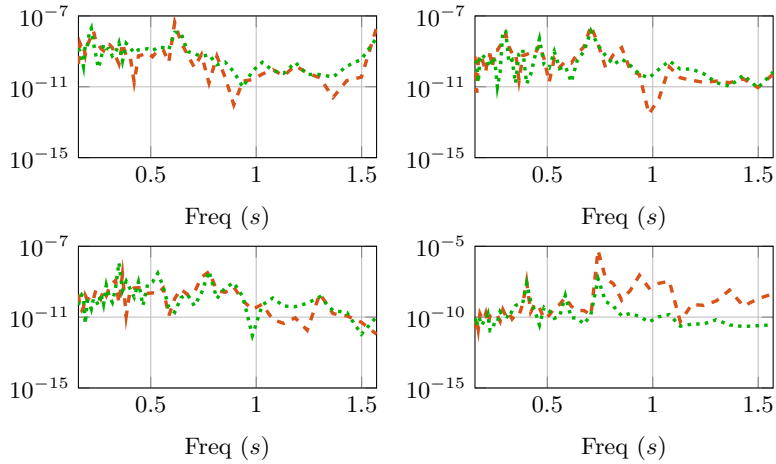}%

	}
	\caption{The Bode plots of the absolute error between the original and reduced-order systems for the above considered parameter values.}
    \label{fig:Gyro_exam_TF}
\end{subfigure}
\caption{Gyro example: Comparison of the original and reduced-order systems for different parameters values.}
\label{fig:gyro_plot}
\end{figure}

In order to apply the proposed method, we take $500$ points for frequency $s$ in the logarithmic way and the same number of random points for parameter $\bp = \bbm d,\theta\ebm^T$ in the considered range.  In \Cref{fig:Anemometer_SVD}, we first show the singular values, obtained by employing \Cref{algo:Inter_structured_para}, which indicates a rapid decay. Since the magnitude of the transfer function of the system is very small and wide range $~(10^{-7}{-}10^{-3})$, we choose to truncate at a relatively lower level. Hence, we truncate at  $10^{-15}$, thus leading to a reduced-order system of order $r = 80$. We compare the quality of the reduced-order system with the reduced-order system, obtained in \cite{morFenAB17}, where the authors have obtained a reduced-order system of order $r = 210$.  

We compare the Bode plots of the original and reduced-order systems in \Cref{fig:Anemometer_TF_bode} for four different parameter settings and the Bode plots of the error systems are plotted in \Cref{fig:gyro_plot}. These figures indicate that both reduced-order systems are of very similar quality; but our reduced-order system is of order $r = 80$, whereas the method proposed in \cite{morFenAB17} yields the reduced-order system of order $r = 210$, which is more than two-and-half times larger than ours.

\section{Conclusions}
In this paper, we have studied model order reduction for linear structured parametric systems.  Firstly, we recall the construction of an interpolatory reduced-order system for a given set of interpolation points. Then, we have defined the concepts of reachability and observability for  linear structured parametric systems and connected them with the interpolation-based MOR methods. Hence, by  combining both features, we discuss the construction of reduced-order systems keeping the subspaces that are the most reachable and observable simultaneously. Moreover, we have shown the efficiency of the proposed methods by means of various examples, appearing in science and engineering. 

As future directions, the notion of minimal structured realizations opens several future directions, in particular, construction of minimal realization via Petrov-Galerkin projection. One interesting future direction would be to combine the knowledge of error estimates, e.g. from \cite{morFenAB17} that allows us to choose good interpolations instead of just taking them randomly or in a logarithmic scale.  Moreover, an extension of structured nonlinear systems would be of highly relevant contribution to the reduced-order modeling community by combine the ideas presented in this paper and in \cite{morBenG19}. 

\section*{Acknowledgements}
We would like to thank Ali Seyfi for helping us in the implementations of numerical results during his internship at Max Planck Institute for Dynamics of Complex Technical Systems, Magdeburg, Germany. Moreover, we would like to express our gratitude to Dr.\ Tobias Breiten and Dr.\ Lihong Feng for providing the data from their publications. 
\bibliographystyle{siamplain}
\bibliography{mor,igorBiblio}

\end{document}